\documentclass[11pt,letterpaper]{amsart}

\usepackage{graphicx, amsfonts, amssymb}
\usepackage{mathrsfs, amsmath, amsthm}
\usepackage{verbatim}
\usepackage{relsize}
\usepackage{enumerate}
\usepackage{hyperref}
\usepackage{version}
\usepackage{blindtext}
\usepackage{mathtools}
\usepackage{color}
\usepackage{ stmaryrd }
\usepackage{ esint }
\usepackage{bigints}
\usepackage{bbm}

\newtheorem{theorem}{Theorem}

\newtheorem{Lemm}[theorem]{Lemma}
\newtheorem{prop}[theorem]{Proposition}
\newtheorem{corollary}[theorem]{Corollary}

\theoremstyle{remark}

\title[Spectrum of Hausdorff operators]{Spectrum of Hausdorff operators on weighted Bergman and Hardy spaces of the upper half-plane}

\author[C. Bellavita]{Carlo Bellavita}
\email{carlo.bellavita@gmail.com}
\email{carlo.bellavita@unimi.it}
\address{Dipartimento di Matematica ``F. Enriques''\\
  Dipartimento di Eccellenza MUR 2023-2027\\
Universit\`a degli Studi di Milano\\
Via C. Saldini 50\\
20133 Milano, Italy.}

\author[G. Stylogiannis]{Georgios Stylogiannis}
\email{g.stylog@gmail.com}
\email{stylog@math.auth.gr}
\address{Department of Mathematics\\
Aristotle University of Thessaloniki\\
\\54124 Thessaloniki, Greece.}

\thanks{The first author is member of the Gruppo Nazionale per l’Analisi Matematica, la Probabilità e le loro Applicazioni (GNAMPA) of the Istituto Nazionale di Alta Matematica (INdAM)}

\subjclass{Primary: 47B91; Secondary: 47G99; 30H20; 42A45; 42A85; 46G99}
\keywords{Hausdorff operator; Spectrum; Fourier multipliers; Convolution operators; Power weighted Hardy spaces; Weighted Bergman spaces.}

\begin{document}

\begin{abstract}
We characterize the spectrum of Hausdorff operators on weighted Bergman and power weighted Hardy spaces of the upper half-plane.
\end{abstract}

\maketitle

\section*{Introduction}

The study of integral operators on function spaces is a cornerstone of concrete operator theory. Among these, the {Hausdorff operator} $\mathcal{H}_{\phi}$ is of particular interest due to its relationship with other fundamental operators, such as the Cesàro operator, and its applications in various branches of analysis \cite{Hausdorff1921, Liflyand1999}.
For a holomorphic function $f$ in the upper half-plane $\mathbb{U}$, the Hausdorff operator is defined by the integral
\begin{equation*}
\mathcal{H}_{\phi}f(z)=\int_{0}^{\infty}f\left(\frac{z}{t}\right)\frac{\phi(t)}{t}dt, \quad z\in\mathbb{U}
\end{equation*}
where $\phi$ is a measurable kernel. If $f$ is a measurable function on the real line $\mathbb{R}$, the operator takes a similar form 
\[
H_{\phi}f(x)=\int_{0}^{\infty}f\left( \frac{x}{t}\right) \frac{\phi(t)}{t}dt \quad x\in\mathbb{\mathbb{R}}.
\]
The modern theory of Hausdorff operator branches into two primary directions: the \emph{complex analysis setting} established by Siskakis and Galanopoulos \cite{Galanopoulos2001, Petrosgeorgios2023, Siskakis1987, Siskakis1990, Siskakis1996, Georgios2020}, and the \emph{Lebesgue spaces/ Fourier transform setting} introduced by Georgakis \cite{Georgakis1992}, Liflyand and Móricz \cite{Liflyand1999}. In this article, we aim to integrate and use both these two perspectives.

\medskip
The focus of this article is to characterize the {spectrum} of $\mathcal{H}_{\phi}$ on two spaces of analytic functions within the upper half-plane, the power weighted Hardy spaces $H_{|\cdot|^{a}}^{p}(\mathbb{U})$ and the weighted Bergman spaces $A_{a}^{p}(\mathbb{U})$. We recall that, given the Banach space $X$, the {spectrum} of $\mathcal{H}_{\phi}$ is made by all the complex numbers $\lambda$ such that the operator $\lambda I - \mathcal{H}_{\phi}$ is not boundedly invertible on $X$, where $I$ is the identity operator. The spectrum is denoted by $\sigma(\mathcal{H}_{\phi},X)$ or, more shortly, $\sigma(\mathcal{H}_{\phi})$ when the space $X$ does not need to be specified.

\begin{theorem}\label{Theorem spectrum weighted Hardy}
Let $1\leq p<\infty, a>-1$ and $\phi$ be a measurable function in $[0,\infty)$ which satisfies
$$
\int_{0}^{\infty} |\phi(t)|t^{\frac{a+1}{p}-1}\,dt<\infty.
$$
Then 
$
\sigma(\mathcal{H}_{\phi}, H^{p}_{|\cdot|^{a}}(\mathbb{U}))=\overline{\widehat{k_{a,p}}(\mathbb{R})}
$
where $\widehat{k_{a,p}}(\xi)=\int^{\infty}_{0}\phi(t)t^{\frac{a+1}{p}}\frac{dt}{t^{1+i\xi}}$ for $\xi\in\mathbb{R}$.
\end{theorem}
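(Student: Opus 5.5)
The plan is to transfer $\mathcal{H}_{\phi}$ on $H^{p}_{|\cdot|^{a}}(\mathbb{U})$ to a Fourier multiplier (a convolution operator) on a space where the spectrum is easy to read off. I will use the Paley--Wiener type description of $H^{p}_{|\cdot|^{a}}(\mathbb{U})$: the boundary values give an isometric embedding onto a closed subspace of $L^{p}(\mathbb{R},|x|^{a}dx)$, and $\mathcal{H}_{\phi}$ commutes with taking boundary values. Indeed $f(\cdot/t)$ is again holomorphic in $\mathbb{U}$ for $t>0$, and $H_{\phi}$ acts on boundary functions by the same formula. So the first step is to check that $\mathcal{H}_{\phi}$ is bounded on $H^{p}_{|\cdot|^{a}}(\mathbb{U})$ with norm at most $\int_0^\infty|\phi(t)|t^{\frac{a+1}{p}-1}dt$. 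This follows from Minkowski's integral inequality, since $\|f(\cdot/t)\|_{L^{p}(|x|^{a})}=t^{\frac{a+1}{p}}\|f\|$.

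Next I split the boundary function into its restrictions to $(0,\infty)$ and $(-\infty,0)$ and substitute $x=\pm e^{s}$. The map
\[
f\mapsto g_{\pm}(s)=e^{\frac{(a+1)s}{p}}f(\pm e^{s})
\]
is an isometry of $L^{p}(\mathbb{R}_{\pm},|x|^{a}dx)$ onto $L^{p}(\mathbb{R},ds)$. With $t=e^{u}$, the operator $H_{\phi}$ becomes convolution with $k_{a,p}(u)=\phi(e^{u})e^{\frac{(a+1)u}{p}}$, which lies in $L^{1}(\mathbb{R})$ exactly by the hypothesis. Its Fourier transform is the function $\widehat{k_{a,p}}$ of the statement. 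On $L^{p}(\mathbb{R}_{+})\oplus L^{p}(\mathbb{R}_{-})$ the operator $H_{\phi}$ is therefore unitarily equivalent to $C_{k}\oplus C_{k}$. Since $L^{1}(\mathbb{R})$ is a commutative Banach algebra whose maximal ideal space is $\mathbb{R}\cup\{\infty\}$, Wiener's lemma gives that the convolution spectrum on $L^{p}(\mathbb{R})$ equals $\overline{\widehat{k}(\mathbb{R})}$. This uses the Riemann--Lebesgue lemma, which puts $0$ in the closure.

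The main obstacle is passing from $L^{p}$ to the invariant subspace $H^{p}_{|\cdot|^{a}}(\mathbb{U})$. For the inclusion $\sigma(\mathcal{H}_{\phi})\subseteq\overline{\widehat{k_{a,p}}(\mathbb{R})}$, take $\lambda$ outside this set. By Wiener's lemma, $(\lambda-C_{k})^{-1}=\lambda^{-1}(I+C_{m})$ for some $m\in L^{1}(\mathbb{R})$. Pulled back, this is $\lambda^{-1}(I+H_{\psi})$ for a kernel $\psi$ with $\psi(e^{u})e^{\frac{(a+1)u}{p}}=m(u)$. Any Hausdorff operator with such an integrable kernel maps $H^{p}_{|\cdot|^{a}}(\mathbb{U})$ into itself by the first step, so the inverse restricts to the subspace. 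For the reverse inclusion, fix $\lambda=\widehat{k_{a,p}}(\xi_0)$ and construct approximate eigenvectors inside the Hardy space. I would take
\[
f_{\varepsilon}(z)=z^{-\frac{a+1}{p}-i\xi_0-\varepsilon}\,(\text{suitable cutoff}),
\]
using the principal branch, which is analytic in $\mathbb{U}$. A cleaner choice is $f_{\varepsilon}(z)=z^{-\frac{a+1}{p}+\varepsilon-i\xi_0}(z+i)^{-2\varepsilon}$, normalized in norm. These are near-eigenfunctions: $\mathcal{H}_{\phi}$ acts on the pure powers $z^{-\frac{a+1}{p}-i\xi}$ by multiplication by $\widehat{k_{a,p}}(\xi)$. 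One must check that $\|(\mathcal{H}_{\phi}-\lambda)f_\varepsilon\|/\|f_\varepsilon\|\to0$ as $\varepsilon\to0$. This goes by dominated convergence on the log-scale, where the $f_\varepsilon$ look like slowly modulated plane waves $e^{-i\xi_0 s}$ with spread-out envelopes, the standard approximate-eigenvector argument for convolution operators. Approximate point spectrum lies in the spectrum, and closedness of the spectrum finishes the proof.
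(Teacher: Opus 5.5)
Your proposal is correct and follows the paper's route. For $\subseteq$, you take the Wiener-lemma inverse $\lambda^{-1}(I+\mathcal H_{\psi_\lambda})$, pull it back to a Hausdorff operator with integrable kernel, and note that it preserves $H^{p}_{|\cdot|^{a}}(\mathbb U)$ via boundary values; for $\supseteq$, you use power-type approximate eigenvectors whose norms grow like $\varepsilon^{-1/p}$. Your dominated-convergence argument in $t$ is a cleaner substitute for the paper's truncation $\phi_\delta$ and mean-value estimates. One sign slip: $\mathcal H_\phi z^{-\frac{a+1}{p}-i\xi}=\widehat{k_{a,p}}(-\xi)\,z^{-\frac{a+1}{p}-i\xi}$, so to approximate $\widehat{k_{a,p}}(\xi_0)$ you need $+i\xi_0$ in the exponent, as in the paper's $f_{\epsilon,\xi}(z)=(z+i)^{-\frac{a+1}{p}-\epsilon+i\xi}$; this is harmless, since $\xi_0$ ranges over all of $\mathbb R$.
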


\begin{theorem}\label{Theorem spectrum Bergman}
Let $1\leq p<\infty, a>0$ and $\phi$ be a measurable function in $[0,\infty)$ which satisfies
$$
\int_{0}^{\infty} |\phi(t)|t^{\frac{a+1}{p}-1}\,dt<\infty.
$$
Then 
$
\sigma(\mathcal{H}_{\phi}, A^{p}_{{a}}(\mathbb{U}))=\overline{\widehat{k_{a,p}}(\mathbb{R})}
$
where $\widehat{k_{a,p}}(\xi)=\int^{\infty}_{0}\phi(t)t^{\frac{a+1}{p}}\frac{dt}{t^{1+i\xi}}$ with $\xi\in\mathbb{R}$.
\end{theorem}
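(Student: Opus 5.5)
The plan is to reduce $\mathcal{H}_{\phi}$ on $A^{p}_{a}(\mathbb{U})$ to a convolution operator on the real line and use that a convolution by an $L^1$ kernel has spectrum equal to the closure of the range of its Fourier transform. Writing $z=re^{i\theta}$ with $r>0$, $\theta\in(0,\pi)$, we have $\mathcal{H}_{\phi}f(re^{i\theta})=\int_0^\infty f(\tfrac{r}{t}e^{i\theta})\phi(t)\frac{dt}{t}$, so $\mathcal{H}_{\phi}$ acts separately on each ray as a Mellin convolution. Substituting $r=e^{s}$, $t=e^{u}$ and setting $F_\theta(s)=e^{s(a+1)/p}f(e^{s}e^{i\theta})$, the $A^p_a$ norm becomes (up to a constant) an integral over $\theta$ of $\|F_\theta\|_{L^p(\mathbb{R})}^p$ against an angular weight. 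The exponent $(a+1)/p$ in the statement is exactly the homogeneity of the norm under dilations $f\mapsto f(\cdot/t)$, and $a>0$ keeps the angular weight integrable. In these coordinates
\[
(\mathcal{H}_{\phi}f)^{\sim}_\theta = k*F_\theta,\qquad k(u)=\phi(e^{u})e^{u(a+1)/p},
\]
with $\|k\|_{L^1(\mathbb{R})}=\int_0^\infty|\phi(t)|t^{\frac{a+1}{p}-1}dt<\infty$ and $\widehat{k}(\xi)=\widehat{k_{a,p}}(\xi)$. In particular $\mathcal{H}_{\phi}$ is bounded with norm at most $\|k\|_1$.

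For the inclusion $\sigma(\mathcal{H}_{\phi})\subseteq\overline{\widehat{k_{a,p}}(\mathbb{R})}$, I note first that $0$ lies in the closure by Riemann--Lebesgue. Now take $\lambda\notin\overline{\widehat{k}(\mathbb{R})}$, so $\lambda\neq0$. Then $\lambda-\widehat{k}$ does not vanish on $\mathbb{R}\cup\{\infty\}$. By Wiener's lemma in the unitized algebra $\mathbb{C}\delta\oplus L^1(\mathbb{R})$, there is $g\in L^1(\mathbb{R})$ with $(\lambda-\widehat k)^{-1}=\lambda^{-1}+\widehat g$. Set $\psi(t)=g(\log t)\,t^{-(a+1)/p}$, so that $g$ is the kernel attached to $\psi$ in the same way and $\psi$ satisfies the integrability hypothesis. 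Then $\lambda^{-1}I+\mathcal{H}_{\psi}$ is a bounded operator on $A^p_a(\mathbb{U})$: it preserves analyticity by differentiation under the integral, justified by the local uniform bounds that Bergman functions enjoy. Since $\mathcal{H}_\psi$ and $\mathcal{H}_\phi$ both act rayweise as convolutions, the identity $(\lambda\delta-k)*(\lambda^{-1}\delta+g)=\delta$ shows that $\lambda^{-1}I+\mathcal{H}_{\psi}$ inverts $\lambda I-\mathcal{H}_{\phi}$.

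For the reverse inclusion, the spectrum is closed, so it suffices to show $\lambda=\widehat{k}(\xi_0)\in\sigma(\mathcal{H}_{\phi})$ for each $\xi_0\in\mathbb{R}$. I would build approximate eigenvectors from the formal eigenfunctions $z^{-s}$, which satisfy $\mathcal{H}_{\phi}z^{-s}=\big(\int_0^\infty\phi(t)t^{s-1}dt\big)z^{-s}$. With $s_0=\frac{a+1}{p}-i\xi_0$ the eigenvalue is exactly $\widehat{k}(\xi_0)$, but $z^{-s_0}\notin A^p_a$ because it sits exactly at the critical homogeneity. I would therefore truncate analytically near $0$ and $\infty$, taking for instance
\[
f_\varepsilon(z)=z^{-s_0}\Big(\frac{z}{z+i}\Big)^{\varepsilon}\Big(\frac{i}{z+i}\Big)^{\varepsilon},
\]
or equivalently choosing $F_\theta$ close to $e^{-i\xi_0 s}\chi_\varepsilon(s)$ with $\chi_\varepsilon$ a slowly varying bump of width $\sim1/\varepsilon$. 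One then checks $\|(\lambda-\mathcal{H}_{\phi})f_\varepsilon\|/\|f_\varepsilon\|\to0$. In convolution terms this is the standard estimate $\|k*(e^{-i\xi_0\cdot}\chi_\varepsilon)-\widehat k(\xi_0)e^{-i\xi_0\cdot}\chi_\varepsilon\|_p\le \int|k(u)|\,\|\chi_\varepsilon(\cdot-u)-\chi_\varepsilon\|_p\,du=o(\|\chi_\varepsilon\|_p)$, handled by dominated convergence since $k\in L^1$. It has to be carried out uniformly in $\theta$, with the angular factor coming from $e^{i\theta(\cdot)}$.

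I expect this last step to be the main obstacle. The test functions must be genuinely holomorphic in $\mathbb{U}$ and lie in $A^p_a$, while their angular dependence stays controlled: $|z^{-s_0}|$ carries a factor $e^{\theta\xi_0}$, which is harmless but must be tracked, and the truncation factors must not destroy the near-eigen property uniformly in $\theta$. Everything else is a transfer of the classical $L^1$-convolution spectral theory.
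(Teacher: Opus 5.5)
Your proposal is correct and follows the same route as the paper. Both arguments reduce $\mathcal{H}_\phi$ ray by ray to convolution by $k_{a,p}$ on $L^p(\mathbb{R})$, use Wiener's lemma to build the inverse $\lambda^{-1}(I+\mathcal{H}_{\psi_\lambda})$ on each ray, and obtain the reverse inclusion from approximate eigenvectors built from near-critical powers.

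The only difference is in that last step. The paper uses $(z+i)^{-\frac{a+1}{p}-\epsilon+i\xi}$ and a pointwise mean-value estimate after truncating $\phi$ to $[\delta,1/\delta]$. Your translation estimate needs no truncation, and the uniformity in $\theta$ you flagged is immediate: writing your cutoff as $g(z)^{\varepsilon}$ with $g(z)=iz/(z+i)^{2}$, one has $\bigl|\tfrac{d}{ds}\log g(e^{s+i\theta})\bigr|=\bigl|1-\tfrac{2z}{z+i}\bigr|\le 3$ on $\mathbb{U}$. Hence $|\chi_\varepsilon(s-u)-\chi_\varepsilon(s)|\le (e^{3\varepsilon|u|}-1)\,|\chi_\varepsilon(s)|$ for every $\theta$, and dominated convergence applies.
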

\noindent Theorems \ref{Theorem spectrum weighted Hardy} and \ref{Theorem spectrum Bergman} state that the spectrum of the Hausdorff operator $\mathcal{H}_\phi$ in both the Hardy and Bergman spaces coincides with the closure of the image of the real line under a specific Fourier-type transform of the kernel $\phi$.

By establishing these characterizations, we obtain new lower bounds for the norm of the Hausdorff operator and we extend results regarding the spectral behavior of the Cesàro-like operators
$$
C_{\nu}f(z)=\frac{1}{z^{\nu}}\int_{0}^{z}\zeta^{\nu-1}f(\zeta)\,d\zeta.
$$
\begin{corollary}\label{last corollary}
\

\noindent (i) The Cesaro operator $\mathcal{C}_{\nu}$ is bounded on $H^{p}_{|\cdot|^{a}}(\mathbb{U})$ for $p \cdot  \mbox{Re} \nu>a+1$
with $\|C_{\nu}\|_{H^{p}_{|\cdot|^{a}}}=p(\mbox{Re}\nu-a-1)^{-1}$ and
\begin{align*}
\sigma(\mathcal{C}_{\nu},H^{p}_{|\cdot|^{a}}(\mathbb{U}))&=\sigma_{\text{ess}}(\mathcal{C}_{\nu},H^{p}_{|\cdot|^{a}}(\mathbb{U}))\\
&=\left\lbrace z\in\mathbb{C}: \Bigg\vert z-\frac{p}{2(p\mbox{Re }\nu-a-1)}\Bigg\vert =\frac{p}{2(p\mbox{Re }\nu-a-1)}\right\rbrace .
\end{align*}
\noindent (ii) The Cesaro operator $\mathcal{C}_{\nu}$ is bounded on $A^{p}_{a}(\mathbb{U})$ for $p \cdot \mbox{Re } \nu>a+1$
with $\|C_{\nu}\|_{A^{p}_{a}}=p(\mbox{Re}\nu-a-1)^{-1}$ and 
\begin{align*}
\sigma(\mathcal{C}_{\nu},A^{p}_{a}(\mathbb{U}))&=\sigma_{\text{ess}}(\mathcal{C}_{\nu},A^{p}_{a}(\mathbb{U}))\\
&=\left\lbrace z\in\mathbb{C}: \Bigg\vert z-\frac{p}{2(p\mbox{Re }\nu-a-1)}\Bigg\vert =\frac{p}{2(p\mbox{Re }\nu-a-1)}\right\rbrace.
\end{align*}
\end{corollary}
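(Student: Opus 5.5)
The plan is to realize $C_\nu$ as a Hausdorff operator $\mathcal{H}_\phi$ with an explicit kernel $\phi$. Then Theorem \ref{Theorem spectrum weighted Hardy} (resp. Theorem \ref{Theorem spectrum Bergman}) gives the spectrum, the norm follows from an upper bound plus the spectral radius, and a Fredholm argument identifies the spectrum with the essential spectrum.

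\textbf{Step 1: $C_\nu$ as a Hausdorff operator.} Parametrize the segment $[0,z]$ by $\zeta=sz$ with $0<s<1$. This gives
$$C_\nu f(z)=z^{-\nu}\int_0^1 (sz)^{\nu-1}f(sz)\,z\,ds=\int_0^1 s^{\nu-1}f(sz)\,ds.$$
The powers of $z$ cancel, so no branch issue arises. Now substitute $s=1/t$:
$$C_\nu f(z)=\int_1^\infty t^{1-\nu}f(z/t)\,t^{-2}\,dt=\int_0^\infty f(z/t)\frac{\phi(t)}{t}\,dt,\qquad \phi(t)=t^{-\nu}\chi_{(1,\infty)}(t).$$
Put $c=\mbox{Re}\,\nu-\frac{a+1}{p}$. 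The hypothesis of the theorems becomes
$$\int_1^\infty t^{-\mbox{Re}\,\nu+\frac{a+1}{p}-1}\,dt=\frac1c<\infty,$$
which holds exactly when $c>0$, i.e. $p\,\mbox{Re}\,\nu>a+1$. The same integral bounds $\|\mathcal H_\phi\|$: this is Minkowski's inequality together with the dilation identity $\|f(\cdot/t)\|=t^{(a+1)/p}\|f\|$, which holds in both $H^p_{|\cdot|^a}$ and $A^p_a$. In the Bergman case I keep the standing assumption $a>0$ of Theorem \ref{Theorem spectrum Bergman}.

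\textbf{Step 2: the symbol and its range.} A direct computation gives
$$\widehat{k_{a,p}}(\xi)=\int_1^\infty t^{-\nu+\frac{a+1}{p}-1-i\xi}\,dt=\frac{1}{\nu-\frac{a+1}{p}+i\xi}=\frac{p}{p\nu-a-1+ip\xi}.$$
As $\xi$ runs over $\mathbb R$, the denominator $w=\nu-\frac{a+1}{p}+i\xi$ runs over the vertical line $\mbox{Re}\,w=c>0$. The inversion $w\mapsto 1/w$ maps this line onto the circle $|z-\frac{1}{2c}|=\frac{1}{2c}$ with the point $0$ removed. Taking the closure restores $0$, and $\frac1{2c}=\frac{p}{2(p\,\mbox{Re}\,\nu-a-1)}$. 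Hence the theorems give $\sigma(C_\nu)$ equal to the stated circle.

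\textbf{Step 3: the norm.} Step 1 gives $\|C_\nu\|\le 1/c=\frac{p}{p\,\mbox{Re}\,\nu-a-1}$. Conversely, the norm dominates the spectral radius. The point $1/c$ lies on the spectrum (it is attained at $\xi=-\mbox{Im}\,\nu$), so the spectral radius is at least $1/c$. Therefore $\|C_\nu\|=\frac{p}{p\,\mbox{Re}\,\nu-a-1}$. I read the formula $p(\mbox{Re}\,\nu-a-1)^{-1}$ in the statement as this quantity.

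\textbf{Step 4: spectrum equals essential spectrum.} This is the step needing the most care. Suppose $\lambda$ lies on the circle but $\lambda\notin\sigma_{\mathrm{ess}}$, so $\lambda I-C_\nu$ is Fredholm.
\begin{itemize}
\item The complement of $\sigma_{\mathrm{ess}}$ is open, and the index is locally constant on it.
\item Every point near $\lambda$ off the circle belongs to the resolvent set, where the index is $0$. Hence the component of $\mathbb C\setminus\sigma_{\mathrm{ess}}$ containing $\lambda$ has index $0$ throughout.
\item By the standard punctured-neighbourhood theorem, the nullity of $\mu I-C_\nu$ is constant on a punctured disc around $\lambda$. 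It vanishes at resolvent points of that disc, so $\mu I-C_\nu$ is injective there. Being Fredholm of index $0$, it is then invertible on the whole punctured disc.
\item So $\lambda$ would be an isolated point of $\sigma(C_\nu)$. But a circle has no isolated points.
\end{itemize}
This contradiction gives $\sigma=\sigma_{\mathrm{ess}}$ in both spaces.
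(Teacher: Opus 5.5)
Your proof is correct and follows the same route as the paper. You write $C_\nu=\mathcal{H}_\phi$ with $\phi(t)=t^{-\nu}\chi_{(1,\infty)}(t)$, invoke Theorems \ref{Theorem spectrum weighted Hardy} and \ref{Theorem spectrum Bergman}, and observe that $\widehat{k_{a,p}}(\xi)=(\nu-\frac{a+1}{p}+i\xi)^{-1}$ maps $\mathbb{R}$ onto the stated circle minus the origin.

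The paper only displays $\int_1^\infty t^{\frac{a+1}{p}-\mbox{Re}\,\nu-1}\,dt$ for the norm, which is just the upper bound, and it defers $\sigma=\sigma_{\mathrm{ess}}$ to Remark 4.1 of Oliva-Maza. Your spectral-radius lower bound and your index/punctured-neighbourhood argument supply those two steps correctly. You are also right that the norm should read $p/(p\,\mbox{Re}\,\nu-a-1)$, which is the value the paper's own computation gives.
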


\medskip
The spectral analysis of $\mathcal{H}_{\phi}$ on spaces of analytic functions have been already investigated, for example, by Abadias and Oliva-Maza \cite{ABADIAS2024110298, OlivaMaza2023}. In their seminal article, the authors described the spectrum of the infinitesimal generator of the strongly continuous group of compositions operators that defines $\mathcal{H}_\phi$. Subsequently, the spectral properties are transferred to $\mathcal{H}_{\phi}$ by employing the functional calculus of strip-type and sectorial-type operators. 
While their technique is undeniably elegant, it carries a disadvantage: applying the functional calculus requires specific conditions to be satisfied by the kernel function $\phi$.

The main original idea of this article is that, by applying an unitary operator $U\colon L^{p}(\mathbb{R},|\cdot|^{a}) \to L^{p}(\mathbb{R})$, the Hausdorff operator $H_\phi$ coincides with a convolution operator 
\[
(U \circ H_{\phi})f= (Uf)*k_{a,p}
\]
where
\[
k_{a,p}(t) = e^{\frac{a+1}{p}t}\phi(e^{t}), \quad t \in \mathbb{R}.
\]
This connection allows us to use classical literature results regarding the boundedness and the spectral properties of convolution operators. The situation becomes more complicate when we consider Hausdorff operators acting on spaces of holomorphic functions, since the description of $U(H_{|\cdot|^{a}}^{p}(\mathbb{U}))$ is not explicit. Nevertheless, by using the information obtained for the Lebesgue spaces, we still manage to characterize the spectrum of $\mathcal{H}_\phi$ in the holomorphic setting. 

\medskip
The remainder of this article is organized into six sections. In Section 1, we introduce the specific spaces of holomorphic functions on which the action of $\mathcal{H}_\phi$ is defined and we establish several abstract results required for the subsequent analysis. Section 2 is devoted to characterizing the spectrum of the Hausdorff operator on weighted Lebesgue spaces: specifically, we prove how this spectrum is linked to that of a convolution operator.
In Section 3, we provide the proof for Theorem \ref{Theorem spectrum weighted Hardy} and in Section 4 for Theorem \ref{Theorem spectrum Bergman}. We note that while these two results apply to different spaces, their proofs share a similar structure. In Section 5, we describe the spectral properties of the Cesàro-like operator.
Finally, in Section 6, we extend our study to the Hausdorff operator defined via a measure. Although this concluding section is not intended to be exhaustive, its purpose is to highlight a class of operators whose spectral properties are particularly compelling—especially when associated with measures with irregular spectra.

\medskip
%%%%%%%%%%%%%%%%%%%%%%%%%%%
\section{Preliminaries}

Inside the spectrum of the operator $A$, we distinguish specific subsets based on the behavior of $\lambda I - A$:

\begin{itemize}
    \item \emph{Point Spectrum $\sigma_p(A)$.} It consists of all the eigenvalues of $A$. 
    
    \item \emph{Essential Spectrum $\sigma_{\mathrm{ess}}(A)$.} It is defined as the set of points $\lambda$ where the operator $\lambda I-A$ fails to be Fredholm. Formally
    \[
    \sigma_{\mathrm{ess}}(A) = \{ \lambda \in \mathbb{C} \mid \dim(\ker(\lambda I - A)) = \infty \text{ or } \mathrm{codim}(\mathrm{Ran}(\lambda I - A)) = \infty \},
    \]
    where $\mathrm{codim}(Y) = \dim(X/Y)$ for a subspace $Y \subseteq X$.
    
    \item \emph{Approximate Point Spectrum $\sigma_{ap}(A)$.} It contains all $\lambda \in \mathbb{C}$ such that $\lambda I - A$ is not bounded from below. This occurs if there exists a sequence of unit vectors $\{x_n\} \subset X$ (where $\|x_n\| = 1$) such that
    \[
    \lim_{n \to \infty} \|(\lambda I - A)x_n\| = 0.
    \]
\end{itemize}
The resolvent set of $A$, denoted by $\rho(A)$, is the complement of the spectrum in the complex plane. It consists of all points $\lambda \in \mathbb{C}$ for which the resolvent operator $R(\lambda,A)=(\lambda I - A)^{-1}$ exists and is bounded.

\medskip
Our study focuses on the spectrum of the Hausdorff operator $\mathcal{H}_\phi$ acting on weighted Bergman and Hardy spaces of the upper half-plane $\mathbb{U}$.

Let $a > 0$ and $1 \leq p < \infty$. The {weighted Bergman space} $A^{p}_{a}(\mathbb{U})$ is the space of all the holomorphic functions $f$ on $\mathbb{U}$ such that
\[
\|f\|_{A^{p}_{a}} = \left( \int_{\mathbb{U}} |f(z)|^{p} \, dA_{a}(z) \right)^{1/p} < \infty,
\]
where $dA_{a}(z) = y^{a-1} \, dx dy$ for $z = x + iy$. The classical Hardy space $H^p(\mathbb{U})$ is formally interpreted as the limit case of weighted Bergman spaces as $a \to 0$, denoted by $H^p(\mathbb{U}) = A^{p}_{0}(\mathbb{U})$.

Following García-Cuerva \cite{JoseGarcia1979}, for $1 \leq p < \infty$ and $a > -1$, we define the {power weighted Hardy space} $H^{p}_{|\cdot|^{a}}(\mathbb{U})$ as the space of all the analytic functions on $\mathbb{U}$ satisfying
\[
\|f\|_{H^{p}_{|\cdot|^{a}}} = \sup_{y>0} \left( \int_{\mathbb{R}} |f(x + iy)|^p |x|^{a} \, dx \right)^{1/p} < \infty.
\]
For any $f \in H^{p}_{|\cdot|^{a}}(\mathbb{U})$, the non-tangential limit $f^*(x) = \lim_{y \to 0^+} f(x + iy)$ exists for almost every $x \in \mathbb{R}$. This boundary function $f^*$ satisfies
\[
\|f\|_{H^{p}_{|\cdot|^{a}}}^p = \int_{\mathbb{R}} |f^*(x)|^p |x|^{a} \, dx.
\]
According to \cite[Lemma II.1.2]{JoseGarcia1979}, for $f \in H^{p}_{|\cdot|^{a}}(\mathbb{U})$, we know the growth estimate
\begin{equation}\label{growth_Hardy}
|f(x + iy)| \leq \left[ w_{a}(B(x, y)) \right]^{-1/p} \|f\|_{H^{p}_{|\cdot|^{a}}},
\end{equation}
where, for any bounded measurable set $E \subset \mathbb{R}$, $w_{a}(E) = \int_{E} w_{a}(x) \, dx$, $w_{a}(x) = |x|^{a}$ and $B(x, t) = \{s \in \mathbb{R} : |s - x| < t\}$ for $x \in \mathbb{R}$ and $ t > 0$.  
Equation \eqref{growth_Hardy} implies that the convergence in the $H^{p}_{|\cdot|^{a}}$ norm yields the uniform convergence on compact subsets of $\mathbb{U}$.

\medskip
In the proof of our main theorems, we use the following abstract propositions.
\begin{prop}\label{subspace theorem R}
Let $X$ be a Banach space and $W$ be a closed subspace of $X$. Let $T:X\to X$ be bounded with $T(W)\subset W$. If for every $\lambda\in\rho(T)$, the resolvent $R(\lambda,T)$  satisfies $R(\lambda,T)(W)\subset W$, then 
$$
\sigma(T,W)\subseteq \sigma(T,X).
$$
\end{prop}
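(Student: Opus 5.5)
The plan is to take $\lambda\in\rho(T)$, where $\rho(T)$ is the resolvent set of $T$ acting on $X$, and show that $\lambda$ also lies in the resolvent set of the restriction $T|_W\colon W\to W$. Taking complements then gives $\sigma(T,W)\subseteq\sigma(T,X)$. The candidate inverse of $\lambda I-T|_W$ is the restriction $R(\lambda,T)|_W$. The hypothesis $R(\lambda,T)(W)\subset W$ guarantees that this restriction maps $W$ into $W$.

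I will carry this out in three steps.

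\emph{Step 1.} Since $T(W)\subset W$ and $W$ is closed, $T|_W$ is a bounded operator on the Banach space $W$ with $\|T|_W\|\le\|T\|$. Hence $\sigma(T,W)$ makes sense.

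\emph{Step 2.} For $\lambda\in\rho(T)$, set $S=R(\lambda,T)|_W$. This is a linear map $W\to W$ with $\|S\|\le\|R(\lambda,T)\|$, so it is bounded.

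\emph{Step 3.} Check that $S$ is a two-sided inverse. For $w\in W$ we have
\[
(\lambda I-T|_W)Sw=(\lambda I-T)R(\lambda,T)w=w,
\]
and
\[
S(\lambda I-T|_W)w=R(\lambda,T)(\lambda I-T)w=w.
\]
Both identities use only that $R(\lambda,T)$ inverts $\lambda I-T$ on all of $X$, together with the fact that $Sw\in W$ and $(\lambda I-T)w\in W$, so each expression is a composition of maps on $W$. Therefore $\lambda I-T|_W$ is bijective on $W$ with bounded inverse $S$, that is, $\lambda\in\rho(T,W)$. This yields $\rho(T,X)\subseteq\rho(T,W)$, which is equivalent to the claim.

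There is no real obstacle here. The only point needing care is Step 3: the inverse must map $W$ into itself. Injectivity of $\lambda I-T|_W$ is automatic, but surjectivity onto $W$ is exactly where the invariance hypothesis $R(\lambda,T)(W)\subset W$ is used. Without it, the preimage of some $w\in W$ under $\lambda I-T$ could leave $W$.
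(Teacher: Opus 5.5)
Your proof is correct and uses essentially the same idea as the paper. The paper argues by contradiction, checking that $(T-\lambda I)|_W$ is injective and that every $\phi\in W$ has the preimage $R(\lambda,T)\phi\in W$, and it tacitly relies on the bounded inverse theorem on the closed subspace $W$ to get a bounded inverse; your version exhibits $R(\lambda,T)|_W$ as an explicit two-sided inverse, so boundedness follows directly without the open mapping theorem.
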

\begin{proof}
Let $\lambda \notin \sigma(T,X)$ and assume by contradiction that $\lambda \in \sigma(T,W)$.
There are two possibilities:
\begin{itemize}
    \item Either $(T-\lambda I)_{\vert_{W}}$ is not injective, that is, there exists $\psi \in W$ such that $T\psi-\lambda\psi=0$. However, since $W\subset X$, then $(T-\lambda I)_{\vert_{X}}$ is not injective as well, which contradicts the assumption $\lambda \notin \sigma(T,X)$.
    \item Or $(T-\lambda I)_{\vert_{W}}$ is not surjective, that is, there exists $\phi \in W$ non-zero such that $T\psi-\lambda\psi=\phi$ for no $\psi \in W$. However, since $\lambda \notin \sigma(T,X)$, there exists $\tilde{\psi} \in X$ such that $T\tilde{\psi}-\lambda\tilde{\psi}=\phi$. Since $\tilde{\psi}=R(\lambda,T)\phi \in W$, the operator $(T-\lambda I)_{\vert_{W}}$ needs to be surjective,
\end{itemize}
It follows that $\lambda \notin \sigma(T,W)$, that is,
$\sigma(T,W)\subseteq \sigma(T,V)$.
\end{proof}

\begin{prop}\label{subspace theorem}
Let $W$ be a closed subspace of  a Banach space $V$ such that $P: V \to W$ is the bounded projection. Let $T$ be a bounded operator on $V$ such that $T(W)\subseteq W$. If $T$ commutes with $P$, then 
$$
\sigma(T,W)\subseteq \sigma(T,V).
$$
\end{prop}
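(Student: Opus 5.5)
The plan is to reduce the statement to Proposition \ref{subspace theorem R}. For this I will show that the resolvent of $T$ on $V$ leaves $W$ invariant whenever $T$ commutes with the projection $P$. Here I read "$P: V\to W$ is the bounded projection" as a bounded operator $P$ on $V$ with $P^2=P$ and range exactly $W$. In particular $Pw=w$ for every $w\in W$, and $W=\ker(I-P)$ is closed, as Proposition \ref{subspace theorem R} requires.

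Fix $\lambda\in\rho(T,V)$, so that $R(\lambda,T)=(\lambda I-T)^{-1}$ is bounded on $V$. The first step is to show that $R(\lambda,T)$ commutes with $P$. From $TP=PT$ we get $(\lambda I-T)P=P(\lambda I-T)$. Multiplying this identity on the left and on the right by $R(\lambda,T)$ gives
\[
P\,R(\lambda,T)=R(\lambda,T)\,P.
\]

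The second step is the invariance of $W$. For $w\in W$ we compute
\[
R(\lambda,T)w=R(\lambda,T)Pw=P\,R(\lambda,T)w\in \operatorname{Ran}P=W.
\]
Hence $R(\lambda,T)(W)\subseteq W$ for every $\lambda\in\rho(T)$. Together with the hypothesis $T(W)\subseteq W$, this is exactly what Proposition \ref{subspace theorem R} needs, and that proposition yields $\sigma(T,W)\subseteq\sigma(T,V)$.

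As an alternative that avoids Proposition \ref{subspace theorem R}, one can observe directly that the restriction $R(\lambda,T)|_W$ is a bounded two-sided inverse of $(\lambda I-T)|_W$ on $W$. This gives $\lambda\in\rho(T,W)$ immediately.

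None of these steps is a real obstacle. The only point needing care is the algebraic commutation of the resolvent with $P$, which is immediate from $TP=PT$. Beyond that, one should note that the argument never uses $\ker P$, only $\operatorname{Ran}P=W$ and $P|_W=I_W$.
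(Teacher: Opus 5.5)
Your proof is correct and rests on the same mechanism as the paper's: for $\phi\in W$ the paper takes the solution $\tilde\psi\in V$ of $(T-\lambda)\tilde\psi=\phi$ and applies $P$, using $PT=TP$, to get a solution $P\tilde\psi\in W$; this is exactly your identity $R(\lambda,T)w=P\,R(\lambda,T)w$. Your packaging (commuting $P$ with the resolvent, then invoking Proposition~\ref{subspace theorem R} or restricting $R(\lambda,T)$ to $W$) has one small advantage: it exhibits a bounded inverse on $W$ directly, whereas the paper's injectivity/surjectivity check implicitly relies on the bounded inverse theorem on the closed subspace $W$.
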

\begin{proof}
Let $\lambda \notin \sigma(T,V)$ and assume by contradiction that $\lambda \in \sigma(T,W)$.
There are two possibilities:
\begin{itemize}
    \item Either $(T-\lambda I)_{\vert_{W}}$ is not injective. In this case, we achieve a contradiction as done in Proposition \ref{subspace theorem R}.
    \item Or $(T-\lambda I)_{\vert_{W}}$ is not surjective, that is, there exists $\phi \in W$ non-zero such that $T\psi-\lambda\psi=\phi$ for no $\psi \in W$. However, since $\lambda \notin \sigma(T,V)$, there exists $\tilde{\psi} \in V$ such that $T\tilde{\psi}-\lambda\tilde{\psi}=\phi$.
    Consequently
    \[
    \phi=P(\phi)=P\circ T\tilde{\psi}-\lambda P\tilde{\psi}=(T-\lambda)P\tilde{\psi},
    \]
    which is impossible.
\end{itemize}
It follows that $\lambda \notin \sigma(T,W)$, that is,
$\sigma(T,W)\subseteq \sigma(T,V)$.
\end{proof}

\begin{comment}
    \noindent 
In Theorem \ref{Theorem spectrum weighted Hardy}, we will use the fact that $\mathcal{H}_\phi$ commutes with the Cauchy projection, the bounded projection from the Lebesgue space onto the Hardy space; In Theorem \ref{Theorem spectrum Bergman}, we will need the fact that $\mathcal{H}_\phi$ interchanges with the Bergman projection.
\medskip

\end{comment}
The following deep result is due to Wiener and usually established within
the framework of Banach algebras, see \cite[Pag. 104]{gelfand}. 

\begin{prop}\label{Theorem Wiener}
Let $K\in L^{1}(\mathbb{R})$ and suppose that $\lambda$ is a complex number such that $\lambda\not=0$, and $\lambda\not=\hat{K}(\xi)$ for any $\xi\in\mathbb{R}$. Then, there exists a function $A_\lambda \in L^{1} (\mathbb{R})$ such that
$$
\lambda A_{\lambda}(x)-K*A_{\lambda}(x)=K(x),\quad x\in\mathbb{R}
$$
where $K*A_{\lambda}(x)=\int_{\mathbb{R}}K(s)A_{\lambda}(x-s)ds$ is the usual convolution.
\end{prop}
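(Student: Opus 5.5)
We would prove Proposition \ref{Theorem Wiener} inside the unitized convolution algebra. Let $\mathcal{B}=L^{1}(\mathbb{R})\oplus\mathbb{C}\delta$ be $L^{1}(\mathbb{R})$ with a unit $\delta$ adjoined, and norm $\|F+c\delta\|=\|F\|_{L^1}+|c|$. This is a commutative unital Banach algebra under convolution. The equation $\lambda A_\lambda-K*A_\lambda=K$ is equivalent to the invertibility of $\lambda\delta-K$ in $\mathcal{B}$. Our plan is therefore:
\begin{enumerate}[(i)]
\item show that $\lambda\delta-K$ is invertible in $\mathcal{B}$;
\item extract $A_\lambda$ from its inverse.
\end{enumerate}

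\textbf{Step (i).} We first identify the maximal ideal space of $\mathcal{B}$.
\begin{itemize}
\item Every nonzero multiplicative linear functional on $L^{1}(\mathbb{R})$ has the form $F\mapsto\widehat F(\xi)$ for some $\xi\in\mathbb{R}$. The argument is classical: such a functional is bounded, so it is given by some $\chi\in L^\infty$. Multiplicativity then forces $\chi(x+y)=\chi(x)\chi(y)$ almost everywhere. A bounded measurable character of $\mathbb{R}$ is $e^{-i\xi x}$.
\item Hence the characters of $\mathcal{B}$ are of two kinds. The first kind is $\varphi_\xi(F+c\delta)=\widehat F(\xi)+c$ for $\xi\in\mathbb{R}$. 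The second is the character at infinity, $\varphi_\infty(F+c\delta)=c$, which vanishes on $L^1$.
\item Evaluating on $\lambda\delta-K$ gives $\varphi_\xi(\lambda\delta-K)=\lambda-\widehat K(\xi)$ and $\varphi_\infty(\lambda\delta-K)=\lambda$.
\item By hypothesis both are nonzero: $\lambda\neq\widehat K(\xi)$ for every $\xi$, and $\lambda\neq0$.
\item By Gelfand theory, an element of a commutative unital Banach algebra is invertible exactly when its Gelfand transform never vanishes. Hence $\lambda\delta-K$ is invertible in $\mathcal{B}$.
\end{itemize}

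\textbf{Step (ii).} Write the inverse as $\mu\delta+B$ with $B\in L^{1}(\mathbb{R})$ and expand $(\lambda\delta-K)*(\mu\delta+B)=\delta$.
\begin{itemize}
\item Comparing the $\delta$-components gives $\lambda\mu=1$, so $\mu=1/\lambda$.
\item Comparing the $L^1$-components gives $\lambda B-K*B-\tfrac{1}{\lambda}K=0$.
\item Multiplying by $\lambda$ and setting $A_\lambda=\lambda B\in L^{1}(\mathbb{R})$ yields $\lambda A_\lambda-K*A_\lambda=K$, which is the asserted identity.
\end{itemize}

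The main obstacle is the characterization of the characters of $L^1(\mathbb{R})$ in Step (i), together with the Gelfand invertibility criterion. If one wants to avoid the abstract theory, one can instead follow Wiener's original localization argument. One builds local inverses of $\lambda-\widehat K$ near each point of $\mathbb{R}\cup\{\infty\}$. Near $\infty$ this uses the Riemann--Lebesgue lemma and a Neumann series for $\lambda^{-1}(\delta-\lambda^{-1}K)^{-1}$ after subtracting a compactly supported-spectrum approximation of $K$. One then glues these local inverses with a smooth partition of unity whose Fourier preimages lie in $L^1$. We would take the Banach-algebra route and cite the identification of the Gelfand spectrum, as the paper's reference \cite[Pag. 104]{gelfand} suggests.
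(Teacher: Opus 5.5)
Your argument is correct and follows the paper's route: the paper gives no proof of its own and defers to the Banach-algebra proof in Gelfand--Raikov--Shilov, which is what you reconstruct (adjoin a unit to $L^{1}(\mathbb{R})$, check that $\lambda\delta-K$ is nonzero under every character $\varphi_\xi$ and under $\varphi_\infty$, invert, and set $A_\lambda=\lambda B$ from the $L^{1}$-component; this is consistent with the paper's later use of $\lambda^{-1}(I+A_\lambda*)$ as the inverse of $\lambda I-k*$). One minor point: the identity you obtain is an equality in $L^{1}$, so it holds almost everywhere, which is the only sensible reading of ``$x\in\mathbb{R}$'' in the statement.
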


\medskip
%%%%%%%%%%%%%%%%%%%%%%%%%%%

\section{Spectrum of $H_\phi$ on $L^{p}(\mathbb{R},|\cdot|^{a})$}

Before entering the proof of our main results, we analyze the spectrum of the Hausdorff operator $H_\phi$ acting on power weighted Lebesgue spaces of the real line. 

Let $1 \leq p \leq \infty$ and $a>-1$. The weighted Lebesgue space $L^{p}(\mathbb{R},|\cdot|^{a})$ consists by all the functions $f$ such that
\[
\|f\|_{L^{p}(\mathbb{R},|\cdot|^{a})}^{p}=\int_{\mathbb{R}} |f(x)|^p|x|^a dx<\infty, \quad 1\leq p<\infty,
\]
and, for $p=\infty$, by all $f$ such that $\|f\|_{L^{\infty}(\mathbb{R},|\cdot|^{a})}=\sup\{|f(x)|:x\in\mathbb{R}\}<\infty$.

The boundedness of the Hausdorff operator acting on weighted Lebesgue spaces was studied in \cite{Lizma14}, \cite{Miana21} and, afterwards, further developed in \cite{Hung2025}.

\begin{theorem}\cite[Theorem 1.5]{Hung2025}\label{Boundednes hausdorff in Lesbegue}
Let $1\leq p\leq\infty$ and $a>-1$. Assume that $\phi$ is a measurable function on $(0,\infty)$ such that 
\[
\int_{0}^{\infty}|\phi(t)| t^{\frac{1+a}{p}-1}dt.
\]
Then, $H_\phi$ is bounded on $L^{p}(\mathbb{R},|\cdot|^{a})$ and
\[
\left\vert \int_{0}^{\infty}\phi(t) t^{\frac{1+a}{p}-1}dt\right\vert \leq \| H_\phi\|_{L^{p}(\mathbb{R},|\cdot|^{a})} \leq \int_{0}^{\infty}|\phi(t)| t^{\frac{1+a}{p}-1}dt.
\]
\end{theorem}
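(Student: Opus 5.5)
The plan is to conjugate $H_\phi$ into a convolution operator on $L^{p}(\mathbb{R})$ by an isometry. The upper bound then follows from Young's inequality, and the lower bound from testing against functions that almost diagonalize the operator. The case $1\le p<\infty$ is treated first, then $p=\infty$.

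\textbf{Reduction to convolution.} Split $f$ into its restrictions to $(0,\infty)$ and $(-\infty,0)$. Since $x/t$ has the same sign as $x$ for $t>0$, $H_\phi$ preserves each half-line, and it suffices to work on $(0,\infty)$; the negative half is identical after $x\mapsto -x$. Define
\[
Uf(s)=e^{\frac{a+1}{p}s}f(e^{s}), \qquad s\in\mathbb{R}.
\]
The substitution $x=e^{s}$ shows that $U$ is an isometry of $L^{p}((0,\infty),x^{a}dx)$ onto $L^{p}(\mathbb{R})$, because
\[
\int_0^\infty |f(x)|^p x^{a}\,dx=\int_{\mathbb{R}}|f(e^{s})|^p e^{(a+1)s}\,ds.
\]
Next set $t=e^{u}$ in $H_\phi f(e^{s})=\int_0^\infty f(e^{s}/t)\phi(t)\,dt/t$. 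This gives
\[
U H_\phi f(s)=\int_{\mathbb{R}} e^{\frac{a+1}{p}u}\phi(e^{u})\,e^{\frac{a+1}{p}(s-u)}f(e^{s-u})\,du=(k_{a,p}*Uf)(s).
\]
Moreover
\[
\|k_{a,p}\|_{L^{1}}=\int_0^\infty|\phi(t)|t^{\frac{a+1}{p}-1}dt<\infty .
\]
For $p=\infty$ the same computation holds with the exponent $\frac{a+1}{p}$ replaced by $0$, and $U$ is just composition with $e^{s}$, an isometry of $L^\infty$.

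\textbf{Upper bound.} Young's inequality $\|k*g\|_{p}\le\|k\|_{1}\|g\|_{p}$ applies on each half-line. Recombining the two halves gives
\[
\|H_\phi\|\le\|k_{a,p}\|_{1}=\int_0^\infty|\phi(t)|t^{\frac{a+1}{p}-1}dt .
\]

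\textbf{Lower bound.} The target is $|\widehat{k_{a,p}}(0)|=\left|\int_0^\infty\phi(t)t^{\frac{a+1}{p}-1}dt\right|$. For $1\le p<\infty$, test on $g_N=(2N)^{-1/p}\mathbbm{1}_{[-N,N]}$, which has $\|g_N\|_p=1$. Pick $M$ with $\int_{|u|>M}|k|<\varepsilon$. Then
\[
k*g_N(s)=\widehat{k}(0)(2N)^{-1/p}+O(\varepsilon)(2N)^{-1/p}\quad\text{for } |s|\le N-M,
\]
so
\[
\|k*g_N\|_p\ge(|\widehat{k}(0)|-\varepsilon)\left(\frac{N-M}{N}\right)^{1/p}\to|\widehat{k}(0)|-\varepsilon .
\]
Pulling $g_N$ back by $U^{-1}$ and extending by zero to negative $x$ gives the lower bound for $H_\phi$. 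For $p=\infty$, the constant function $1$ gives directly $H_\phi 1=\int_0^\infty\phi(t)\,dt/t$, which is the stated quantity with $\frac{1+a}{p}=0$.

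\textbf{Main obstacle.} The only delicate point is the lower-bound approximation: controlling the tails of $k$ against the boundary layer of the box function. This is routine once $k\in L^1$. I also read the hypothesis as the finiteness of the integral, which the statement omits.
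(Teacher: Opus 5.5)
Your proof is correct, and your reading of the hypothesis as finiteness of the integral is the intended one.

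The paper does not prove this statement; it imports it from Hung--Dy. Your argument, however, is built from the paper's own machinery:
\begin{itemize}
\item the isometry $U$ of Lemma~\ref{Lemma2};
\item the identity $UH_\phi U^{-1}f=k_{a,p}*f$ of Lemma~\ref{Lemma 3};
\item Young's inequality for the upper bound, which pulled back is the Minkowski-inequality bound the paper invokes for $H_\mu$ in Section~6;
\item an approximate-eigenvector test at frequency $0$ for the lower bound.
\end{itemize}

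Doing it your way makes Section~2 self-contained. In the proof of Theorem~\ref{Spectrum L^p} the paper cites this theorem to get boundedness of $H_\phi$. Your reduction shows this is immediate from $\|k_{a,p}\|_{L^1}=\int_0^\infty|\phi(t)|t^{\frac{a+1}{p}-1}\,dt$.

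Your test functions also give more than is stated. Replacing $g_N$ by $e^{i\xi s}g_N(s)$, or the constant $1$ by $x^{i\xi}$ when $p=\infty$, yields $\|H_\phi\|_{L^{p}(\mathbb{R},|\cdot|^{a})}\geq\sup_{\xi\in\mathbb{R}}|\widehat{k_{a,p}}(\xi)|$. This is the Lebesgue-space analogue of Corollary~\ref{bound from below}, and it is consistent with $\widehat{k_{a,p}}(\mathbb{R})\subseteq\sigma(H_\phi)$ from Theorem~\ref{Spectrum L^p}.
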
 
The key ingredient for the description of $\sigma(\mathcal{H}_\phi, L^{p}(\mathbb{R},|\cdot|^{a}) )$ is the fact that, after appropriate manipulation, the Hausdorff operator becomes a convolution operator and, for this reason, we are able to characterize its spectrum.

It is clear that 
\[
L^{p}(\mathbb{R},|\cdot|^a) \cong L^{p}(\mathbb{R}_{+},x^a\, dx) \oplus L^{p}(\mathbb{R}_{+}, x^a\,dx).
\]
Indeed, given a function $f \in L^{p}(\mathbb{R},|\cdot|^a)$, we consider the pair $(f_+, f_-)$ defined as 
\[
f_{+}(x) = f(x)\chi_{(0,\infty)}(x), \quad f_{-}(x) = f(-x)\chi_{(-\infty,0)}(-x).
\]
Clearly,
\[
\|f\|^{p}_{L^{p}(\mathbb{R},|\cdot|^a)} = \|f_{+}\|^{p}_{L^{p}(\mathbb{R}_{+},x^a\,dx)} + \|f_{-}\|^{p}_{L^{p}(\mathbb{R}_{+},x^a\,dx)}
\]
and the map $f \to (f_{+}, f_{-})$ provides an isometric isomorphism between $L^{p}(\mathbb{R},|\cdot|^a)$ and $L^{p}(\mathbb{R}_{+},x^a\, dx) \oplus L^{p}(\mathbb{R}_{+}, x^a\,dx)$. 

\begin{Lemm}\label{Lemma2}
The space $L^{p}(\mathbb{R}_{+},x^a\, dx)$ is isometrically isomorphic to $L^{p}(\mathbb{R})$.
\end{Lemm}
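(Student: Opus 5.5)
I will write the isometry down explicitly as a logarithmic change of variables combined with a power weight, since this is exactly the map $U$ mentioned in the introduction. For $1\le p<\infty$ and $f\in L^{p}(\mathbb{R}_{+},x^a\,dx)$, define
\[
(Uf)(t)=e^{\frac{a+1}{p}t}f(e^{t}),\qquad t\in\mathbb{R}.
\]
The substitution $x=e^{t}$, $dx=e^{t}\,dt$, gives
\[
\int_{\mathbb{R}}|Uf(t)|^{p}\,dt=\int_{\mathbb{R}}e^{(a+1)t}|f(e^{t})|^{p}\,dt=\int_{0}^{\infty}|f(x)|^{p}x^{a+1}\frac{dx}{x}=\int_0^\infty |f(x)|^p x^a\,dx,
\]
so $U$ is an isometry into $L^{p}(\mathbb{R})$.

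Next I will show that $U$ is onto. Given $g\in L^{p}(\mathbb{R})$, set
\[
(Vg)(x)=x^{-\frac{a+1}{p}}g(\log x),\qquad x>0 .
\]
The same substitution, run backwards, shows $\|Vg\|_{L^{p}(\mathbb{R}_{+},x^{a}dx)}=\|g\|_{L^{p}(\mathbb{R})}$. A direct check gives $UV=I$ and $VU=I$. Both maps are defined pointwise a.e. and are compositions with a diffeomorphism of $(0,\infty)$ onto $\mathbb{R}$, so they preserve measurability and null sets. They are evidently linear, hence $U$ is a linear isometric isomorphism.

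For $p=\infty$ the weight plays no role in the norm, because the paper's definition takes the plain supremum. In that case I will take $Uf(t)=f(e^{t})$, which is trivially an isometric isomorphism of $L^\infty$. This is consistent with the general formula, since the exponent $\frac{a+1}{p}$ vanishes when $p=\infty$.

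No step here is a real obstacle; the only care needed is in choosing the exponent $\frac{a+1}{p}$ so that the Jacobian $e^{t}$ and the weight $e^{at}$ are absorbed exactly. I will also note for later use that this is the normalization under which $U H_{\phi}U^{-1}$ becomes convolution with $k_{a,p}$. Indeed, the computation $U(H_\phi f)(s)=\int_{\mathbb{R}} e^{\frac{a+1}{p}u}\phi(e^{u})\,(Uf)(s-u)\,du$ follows by substituting $t=e^{u}$ and splitting $e^{\frac{a+1}{p}s}=e^{\frac{a+1}{p}u}e^{\frac{a+1}{p}(s-u)}$.
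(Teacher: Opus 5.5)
Your proposal is correct and follows the paper's proof: the same weighted logarithmic substitution $Uf(t)=e^{\frac{a+1}{p}t}f(e^{t})$, the same explicit inverse $x^{-\frac{a+1}{p}}g(\ln x)$, and the same change of variables to verify the isometry. Your explicit treatment of $p=\infty$ (where the paper's sup-norm ignores the weight, so $f\mapsto f(e^{t})$ suffices) spells out the ``obvious modifications'' that the paper leaves to the reader.
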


\begin{proof}
Let $f \in L^{p}(\mathbb{R}_{+},x^a\, dx)$ and define the map $U: L^{p}(\mathbb{R}_{+},x^a\, dx) \to L^{p}(\mathbb{R})$ as 
\[
Uf(x) = e^{\frac{a+1}{p}x}f(e^{x}).
\]
We observe that 
\begin{align*}
\|Uf\|^{p}_{L^{p}(\mathbb{R})} &= \int_{\mathbb{R}} |Uf(x)|^p \,dx = \int_{\mathbb{R}} |f(e^{x})|^{p} e^{(a+1)x} \,dx \\
&= \int_{0}^{\infty} |f(u)|^{p} u^a \,du = \|f\|^{p}_{L^{p}(\mathbb{R}_{+},x^a\,dx)}.
\end{align*}
This shows that $U$ is an isometry. The operator $U$ is also surjective. Indeed, for $g \in L^{p}(\mathbb{R})$, set $f(x) = x^{-\frac{a+1}{p}} g(\ln x)$ for $x \in \mathbb{R}_{+}$. Then $Uf = g$. The inverse operator $U^{-1}: L^{p}(\mathbb{R}) \to L^{p}(\mathbb{R}_{+},x^a\, dx)$ is defined as $U^{-1}g(x) = x^{-\frac{a+1}{p}} g(\ln x)$. The case $p=\infty$ follows by  obvious modifications.
\end{proof}

The Hausdorff operator respects the decomposition $L^{p}(\mathbb{R}_{+},x^a \, dx)  \oplus  L^{p}(\mathbb{R}_{+}, x^a \,dx)$. Indeed, $H_{\phi}f_{+}, H_{\phi}f_{-}$ are supported on $(0,\infty)$, and we may write $H_{\phi} = H_{\phi}^{+} \oplus H_{\phi}^{-}$.
For this reason, it is sufficient to consider the action of the Hausdorff operator $H_{\phi}$ on $L^{p}(\mathbb{R}_{+},x^a\, dx)$.

\begin{Lemm}\label{Lemma 3}
Let $H_\phi$ be a Hausdorff operator on $L^{p}(\mathbb{R}_{+},x^a\,dx)$ and let $U$ be defined as in Lemma \ref{Lemma2}. Then, for every $f \in L^{p}(\mathbb{R})$,
\[
(UH_\phi U^{-1})f(x) = \int_{\mathbb{R}} f(x-s)k_{a,p}(s) \, ds,
\]
where $k_{a,p}(s) = \phi(e^{s})e^{\frac{a+1}{p}s}$.
\end{Lemm}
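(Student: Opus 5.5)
The identity is a direct change of variables, so I would simply compose the three maps and substitute. Fix $f \in L^{p}(\mathbb{R})$. By Lemma \ref{Lemma2}, $U^{-1}f(u) = u^{-\frac{a+1}{p}} f(\ln u)$ for $u>0$. Applying $H_\phi$ (restricted to $L^{p}(\mathbb{R}_{+},x^a\,dx)$) gives, for $y>0$,
\[
H_\phi U^{-1} f(y) = \int_{0}^{\infty} \Big(\frac{y}{t}\Big)^{-\frac{a+1}{p}} f\Big(\ln \frac{y}{t}\Big)\frac{\phi(t)}{t}\,dt .
\]
Applying $U$ means evaluating at $y=e^{x}$ and multiplying by $e^{\frac{a+1}{p}x}$. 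The factor $e^{\frac{a+1}{p}x}$ cancels against $(e^{x})^{-\frac{a+1}{p}}$, leaving
\[
(UH_\phi U^{-1})f(x)=\int_0^\infty t^{\frac{a+1}{p}} f(x-\ln t)\,\phi(t)\,\frac{dt}{t}.
\]

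Next I substitute $t=e^{s}$, so that $dt/t = ds$ and $s$ ranges over $\mathbb{R}$. The integrand becomes $e^{\frac{a+1}{p}s}\phi(e^{s}) f(x-s) = k_{a,p}(s) f(x-s)$, which is the claimed convolution formula.

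The only point requiring care is justification, namely that the integrals converge absolutely for a.e.\ $x$ so that the manipulations are legitimate. Under the standing hypothesis $\int_0^\infty |\phi(t)|t^{\frac{a+1}{p}-1}dt<\infty$, the same substitution gives $\|k_{a,p}\|_{L^1(\mathbb{R})}$ equal to exactly this quantity. Young's inequality then shows that $|f|*|k_{a,p}|\in L^{p}(\mathbb{R})$, and in particular it is finite a.e. Tonelli's theorem, applied to the same computation with absolute values, therefore legitimizes every step.

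This also recovers the upper bound in Theorem \ref{Boundednes hausdorff in Lesbegue}. For $p=\infty$ the argument is identical.
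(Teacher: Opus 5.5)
Your proposal is correct and follows essentially the same route as the paper: compose $U$, $H_\phi$ and $U^{-1}$, cancel $e^{\frac{a+1}{p}x}$ against $(e^{x}/t)^{-\frac{a+1}{p}}$, and substitute $t=e^{s}$. Your added check that the integrals converge absolutely for a.e.\ $x$ (since $\|k_{a,p}\|_{L^1(\mathbb{R})}=\int_0^\infty|\phi(t)|t^{\frac{a+1}{p}-1}\,dt$, combined with Young's inequality) is a detail the paper leaves implicit.
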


\begin{proof}
Let $f \in L^{p}(\mathbb{R})$ and set $g = U^{-1}f$. Then 
\begin{align*}
(UH_{\phi} g)(x) &= e^{\frac{a+1}{p}x} \int_{0}^{\infty} g\left(\frac{e^{x}}{t}\right) \frac{\phi(t)}{t} \, dt \\
&= \int_{0}^{\infty} e^{\frac{a+1}{p}(x-\ln t)} g\left(\frac{e^{x}}{t}\right) t^{\frac{a+1}{p}-1} \phi(t) \, dt \\
&= \int_{0}^{\infty} Ug(x-\ln t) \, t^{\frac{a+1}{p}-1} \phi(t) \, dt.
\end{align*}
Let $s = \ln t$, so $ds = t^{-1} dt$. Then 
\[
(UH_{\phi} U^{-1})f(x) = \int_{\mathbb{R}} f(x-s)k_{a,p}(s) \, ds.
\]
This completes the proof.
\end{proof}
We are now ready to describe the spectrum of ${H}_\phi$ acting on $L^{p}(\mathbb{R},|\cdot|^a)$. We recall that, whenever it is well defined, the Fourier transform of $k$ is given by  $\hat{k}(x)=\int_{\mathbb{R}}k(t)e^{-itx}dt$.

\begin{theorem}\label{Spectrum L^p}
Let $1 \leq p \leq \infty$ and $a > -1$. If $\int_{0}^{\infty}|\phi(t)| t^{\frac{1+a}{p}-1}dt$ is finite,
then 
\[
\sigma(H_\phi, L^{p}(\mathbb{R},|\cdot|^a)) = \overline{\widehat{k_{a,p}}(\mathbb{R})},
\]
where $k_{a,p}(t) = e^{\frac{a+1}{p}t}\phi(e^{t})$.
\end{theorem}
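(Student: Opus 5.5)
The plan is to reduce everything to a convolution operator on $L^p(\mathbb{R})$ and then compute the spectrum of that convolution operator with Wiener's theorem. By the isometric splitting $L^{p}(\mathbb{R},|\cdot|^a)\cong L^{p}(\mathbb{R}_{+},x^a\,dx)\oplus L^{p}(\mathbb{R}_{+},x^a\,dx)$ we have $H_\phi=H_\phi^+\oplus H_\phi^-$. Both summands are the same operator on $L^{p}(\mathbb{R}_{+},x^a\,dx)$, so $\sigma(H_\phi,L^p(\mathbb{R},|\cdot|^a))=\sigma(H_\phi,L^p(\mathbb{R}_+,x^a\,dx))$. By Lemma \ref{Lemma2} and Lemma \ref{Lemma 3}, this operator is isometrically similar to $C_k f=f*k$ on $L^p(\mathbb{R})$, with $k=k_{a,p}$. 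The hypothesis is exactly $\|k\|_{L^1(\mathbb{R})}=\int_0^\infty|\phi(t)|t^{\frac{a+1}{p}-1}dt<\infty$, after substituting $t=e^s$. Similarity preserves the spectrum, so it remains to show $\sigma(C_k,L^p(\mathbb{R}))=\overline{\hat k(\mathbb{R})}$ for $k\in L^1$ and $1\le p\le\infty$. By Riemann--Lebesgue, $\hat k(\xi)\to0$ as $|\xi|\to\infty$, so $\overline{\hat k(\mathbb{R})}=\hat k(\mathbb{R})\cup\{0\}$.

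\emph{Inclusion $\sigma\subseteq\overline{\hat k(\mathbb{R})}$.} Take $\lambda\notin\overline{\hat k(\mathbb{R})}$; then $\lambda\neq0$. Proposition \ref{Theorem Wiener} gives $A_\lambda\in L^1$ with $\lambda A_\lambda-k*A_\lambda=k$. Define $R f=\lambda^{-1}(f+A_\lambda*f)$, which is bounded on every $L^p$ by Young's inequality. Convolution on $L^1$ is commutative and associative, so a direct computation gives $(\lambda I-C_k)R=R(\lambda I-C_k)=I$:
\[
(\lambda-C_k)(f+A_\lambda*f)=\lambda f+(\lambda A_\lambda-k*A_\lambda-k)*f=\lambda f.
\]
Hence $\lambda\in\rho(C_k)$.

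\emph{Inclusion $\overline{\hat k(\mathbb{R})}\subseteq\sigma$.} The spectrum is closed, so it suffices to show $\hat k(\xi_0)\in\sigma_{ap}(C_k)$ for every $\xi_0$; then $0$ is also in the spectrum as a limit point. The natural test functions are modulated dilated bumps $f_n(x)=n^{-1/p}e^{i\xi_0x}g(x/n)$, with $g\in C_c^\infty$ and $\|g\|_p=1$. One has
\[
C_kf_n-\hat k(\xi_0)f_n=n^{-1/p}e^{i\xi_0x}\int k(s)e^{-i\xi_0 s}\bigl(g((x-s)/n)-g(x/n)\bigr)\,ds .
\]
The $L^p$ norm of this is at most $\int|k(s)|\,\|g(\cdot-s/n)-g\|_p\,ds$, which tends to $0$ by dominated convergence, using continuity of translation (for $p=\infty$, uniform continuity of $g$). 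This gives approximate eigenvectors, so $\hat k(\xi_0)\in\sigma(C_k)$.

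The main obstacle is the first inclusion, which needs Wiener's theorem to produce the resolvent and the bookkeeping that the resolvent is again a convolution-plus-identity; this also makes $p=\infty$ harmless. The second inclusion is routine. One point to watch is that $0$ is always in the spectrum even if $0\notin\hat k(\mathbb{R})$, and this comes from closedness of the spectrum together with Riemann--Lebesgue.
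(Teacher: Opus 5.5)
Your proof is correct. Its reduction is exactly the paper's: you split $L^p(\mathbb{R},|\cdot|^a)$ into two copies of $L^p(\mathbb{R}_+,x^a\,dx)$, on which $H_\phi$ acts identically, and then conjugate by $U$ to convolution with $k_{a,p}\in L^1(\mathbb{R})$.

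The only difference is the last step. The paper simply quotes Boyd's result that $\sigma(K,L^p(\mathbb{R}))=\overline{\widehat{k_{a,p}}(\mathbb{R})}$, whereas you prove it. You obtain the resolvent $\lambda^{-1}(I+A_\lambda *)$ from Proposition~\ref{Theorem Wiener}, and you show that $\hat k(\xi_0)$ is an approximate eigenvalue using the normalized modulated bumps $n^{-1/p}e^{i\xi_0 x}g(x/n)$.

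Citing is shorter. Your version is self-contained and treats every $1\le p\le\infty$ uniformly. Since $\sigma_{ap}$ is closed, it also gives $\sigma=\sigma_{ap}$. It also makes explicit the structure of the resolvent, which after conjugating back is $\lambda^{-1}(I+H_{\psi_\lambda})$ with $\psi_\lambda(s)=A_\lambda(\ln s)s^{-\frac{a+1}{p}}$. The paper uses that structure only later, in the proofs of Theorems~\ref{Theorem spectrum weighted Hardy} and \ref{Theorem spectrum Bergman}. There it runs the same two-sided scheme (Wiener for one inclusion, test functions for the other) directly on the holomorphic spaces, with $(z+i)^{-\frac{a+1}{p}-\epsilon+i\xi}$ in place of your bumps.
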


\begin{proof}
Since $L^{p}(\mathbb{R},|\cdot|^a) \cong L^{p}(\mathbb{R}_{+},x^a\, dx) \oplus L^{p}(\mathbb{R}_{+}, x^a\,dx)$, we have that
\[
\sigma(H_{\phi}, L^{p}(\mathbb{R}, |\cdot|^{a})) = \sigma(H_{\phi}, L^{p}(\mathbb{R}_+, x^{a}\,dx)).
\]
According to Theorem \ref{Boundednes hausdorff in Lesbegue}, the Hausdorff operator $H_\phi$ is bounded on  $L^{p}(\mathbb{R}, |\cdot|^{a})$ and the function $k_{a,p} \in L^1(\mathbb{R})$. 
For $f \in L^{p}(\mathbb{R})$, let $$
Kf(x) = \int_{\mathbb{R}} f(x-s)k_{a,p}(s) \, ds
$$ be a convolution operator induced by $k_{a,p}$. It is well known \cite{Boyd_1973} that the spectrum of $K$ satisfies  $\sigma(K, L^{p}(\mathbb{R})) = \overline{\widehat{k_{a,p}}(\mathbb{R})}$. By the unitary equivalence established in Lemma \ref{Lemma 3}, we conclude that
\[
\sigma(H_{\phi}, L^{p}(\mathbb{R}, |\cdot|^{a})) = \sigma(K, L^{p}(\mathbb{R})) = \overline{\widehat{k_{a,p}}(\mathbb{R})}.
\]
\end{proof}

Before starting the analysis of the spectrum of the Hausdorff operators acting on spaces of analytic functions, we observe that the description of the Hausdorff operator through convolution operator as done in Lemma \ref{Lemma 3}, may be also helpful in describing the norm of $H_\phi$ on $L^{p}(\mathbb{R},|\cdot|^a)$. 

\begin{prop}\label{norm hausdorf L^2}
Let $\phi$ be measurable on $[0,\infty)$. The Hausdorff operator $H_{\phi}: L^{2}(\mathbb{R},|\cdot|^a) \to L^{2}(\mathbb{R},|\cdot|^a)$ is bounded if and only if 
\begin{equation}
\widehat{k_{a,2}}(\xi) = \int_{\mathbb{R}} k_{a,2}(s)e^{-i\xi s} \, ds = \int_{0}^{\infty} t^{\frac{a+1}{2}} \phi(t) \frac{dt}{t^{1+i\xi}} \in L^\infty(\mathbb{R}).
\end{equation}
Moreover, $\|H_{\phi}\|_{L^{2}(\mathbb{R},|\cdot|^a)} = \|\widehat{k_{a,2}}\|_{L^{\infty}(\mathbb{R})}$.
\end{prop}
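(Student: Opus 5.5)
The plan is to reduce everything to a Fourier multiplier statement on $L^{2}(\mathbb{R})$ via Lemma \ref{Lemma2} and Lemma \ref{Lemma 3}, and then use Plancherel. By the isometric decomposition $L^{2}(\mathbb{R},|\cdot|^a)\cong L^{2}(\mathbb{R}_+,x^a\,dx)\oplus L^{2}(\mathbb{R}_+,x^a\,dx)$, and since $H_\phi=H_\phi^+\oplus H_\phi^-$ with both summands acting by the same formula, $H_\phi$ is bounded on $L^{2}(\mathbb{R},|\cdot|^a)$ if and only if it is bounded on $L^{2}(\mathbb{R}_+,x^a\,dx)$, with the same norm. Conjugating by the unitary $U$ (with $p=2$), this holds if and only if the operator $K f=f*k_{a,2}$ is bounded on $L^{2}(\mathbb{R})$, and again the norms agree. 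The identity $\widehat{k_{a,2}}(\xi)=\int_0^\infty t^{\frac{a+1}{2}}\phi(t)\,t^{-1-i\xi}\,dt$ follows from the substitution $t=e^{s}$.

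Next I identify $K$ as a multiplier. On the dense class of, say, Schwartz functions $f$ (or $f\in L^1\cap L^2$), the convolution should satisfy $\widehat{Kf}=\widehat{k_{a,2}}\,\hat f$. Plancherel then gives $\|Kf\|_2=(2\pi)^{-1/2}\|\widehat{k_{a,2}}\hat f\|_2$. Hence $K$ is bounded if and only if multiplication by $\widehat{k_{a,2}}$ is bounded on $L^{2}(\mathbb{R})$, which happens exactly when $\widehat{k_{a,2}}\in L^\infty$. In that case the norm of the multiplication operator is $\|\widehat{k_{a,2}}\|_\infty$: the upper bound is immediate, and for the lower bound I test against $\hat f=\chi_E$ with $E$ a set of positive finite measure on which $|\widehat{k_{a,2}}|>\|\widehat{k_{a,2}}\|_\infty-\varepsilon$.

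The main obstacle is making sense of $\widehat{k_{a,2}}$ when $\phi$ is only measurable, since then $k_{a,2}$ need not lie in $L^1$ and the defining integral may fail to converge absolutely. I would handle this as follows.
\begin{enumerate}
\item In the direction where $\widehat{k_{a,2}}\in L^\infty$ is assumed, read $\widehat{k_{a,2}}$ as the distributional (tempered) Fourier transform of $k_{a,2}$, at least when $k_{a,2}$ is locally integrable with tempered growth. Then $K$ and the multiplier $T_m$ with symbol $m=\widehat{k_{a,2}}$ agree on test functions, and since $T_m$ is bounded on $L^2$, $K$ extends boundedly.
\item In the converse direction, boundedness of $K$ on $L^2$ together with translation invariance implies, by the classical result that translation-invariant bounded operators on $L^2$ are Fourier multipliers, that $K=T_m$ for some $m\in L^\infty$ with $\|m\|_\infty=\|K\|$. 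Testing on Schwartz functions identifies $m=\widehat{k_{a,2}}$ distributionally.
\end{enumerate}
If the intended hypothesis is instead the integrability condition of Theorem \ref{Spectrum L^p}, then $k_{a,2}\in L^1$, $\widehat{k_{a,2}}$ is a bounded continuous function, and the argument simplifies to the Plancherel computation above with no distributional subtleties. Combining the steps gives $\|H_\phi\|_{L^{2}(\mathbb{R},|\cdot|^a)}=\|K\|=\|\widehat{k_{a,2}}\|_{L^\infty(\mathbb{R})}$.
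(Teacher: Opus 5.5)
Your proposal is correct and follows the same route as the paper. You split $L^{2}(\mathbb{R},|\cdot|^a)$ into two copies of $L^{2}(\mathbb{R}_+,x^a\,dx)$, conjugate by $U$ to get convolution with $k_{a,2}$ on $L^{2}(\mathbb{R})$, and read off both boundedness and the norm identity $\|H_\phi\|=\|\widehat{k_{a,2}}\|_{L^\infty(\mathbb{R})}$ from Plancherel. Your discussion of how $\widehat{k_{a,2}}$ should be interpreted when $\phi$ is merely measurable (distributionally, or simply via $k_{a,2}\in L^1$) addresses a point the paper's proof passes over silently, so it is a welcome addition rather than a deviation.
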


\begin{proof}
If $H_\phi$ is bounded on $L^{2}(\mathbb{R},|\cdot|^a)$, then it is bounded on $L^{2}(\mathbb{R}_{+},x^a\, dx)$. Therefore, $U{H}_\phi U^{-1}$ is a bounded convolution operator on $L^{2}(\mathbb{R})$. This fact and the Plancherel theorem imply $\widehat{k_{a,2}} \in L^\infty(\mathbb{R})$ and $\|\widehat{k_{a,2}}\|_{L^{\infty}(\mathbb{R})} \leq \|H_{\phi}\|$. Conversely, if $\widehat{k_{a,2}} \in L^\infty(\mathbb{R})$, then $UH_\phi U^{-1}$ is bounded on $L^{2}(\mathbb{R})$, and the norm identity follows from the decomposition of $L^2(\mathbb{R}, |\cdot|^a)$.
\end{proof}

\medskip
%%%%%%%%%%%%%%%%%%%%%%%%%%%
\section{Proof of Theorem \ref{Theorem spectrum weighted Hardy}}

Let $\phi$ be a measurable function on $(0, \infty)$. We consider  bounded Hausdorff operator $\mathcal{H}_\phi$ on the weighted Hardy spaces of the upper half plane. Hung and Dy \cite{Hung2025} characterized its boundedness as follows.

\begin{theorem}[{\cite[Theorem. 1.3 ]{Hung2025}}]\label{Boundedness condition Hardy}
Let $1\leq p\leq\infty$, $a > -1$ and let $\phi$ be a measurable function on $(0, \infty)$.
\begin{enumerate}
    \item[$(i)$] If $\phi$ satisfies the condition
    \begin{equation}\label{Hardy BC}
    \int_{0}^{\infty} t^{\frac{a+1}{p}-1} |\phi(t)| \, dt < \infty,
    \end{equation}
    then $\mathcal{H}_{\phi}$ is bounded on $H^{p}_{|\cdot|^{a}}(\mathbb{U})$. Moreover,
    \[
    \left| \int_{0}^{\infty} t^{\frac{a+1}{p}-1} \phi(t) \, dt \right| \leq \|\mathcal{H}_{\phi}\|^p_{H^{p}_{|\cdot|^{a}}} \leq \int_{0}^{\infty} t^{\frac{a+1}{p}-1} |\phi(t)| \, dt.
    \]
    \item[$(ii)$] Conversely, if $\phi \geq 0$ and $\mathcal{H}_{\phi}$ is bounded on $H^{p}_{|\cdot|^{a}}(\mathbb{U})$, then condition \eqref{Hardy BC} holds.
\end{enumerate}
\end{theorem}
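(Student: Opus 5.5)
The plan for part $(i)$ is to work on horizontal lines and apply Minkowski's integral inequality, after first making sure that $\mathcal{H}_\phi f$ is a well-defined holomorphic function; part $(ii)$ then comes from testing on power functions and using positivity.

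\textbf{Well-definedness.} Fix $f\in H^{p}_{|\cdot|^{a}}(\mathbb{U})$ and $z=x+iy\in\mathbb{U}$. By \eqref{growth_Hardy} applied at $z/t$ we have $|f(z/t)|\le [w_a(B(x/t,y/t))]^{-1/p}\|f\|$. For power weights one has $w_a(B(c,r))\asymp r(|c|+r)^{a}$, and hence
\[
w_a(B(x/t,y/t))\asymp t^{-(a+1)}\,y(|x|+y)^{a}.
\]
This gives $|f(z/t)|\le C(z)\,t^{\frac{a+1}{p}}\|f\|$, where $C(z)$ is locally bounded on $\mathbb{U}$. So the integral defining $\mathcal{H}_\phi f(z)$ converges absolutely, locally uniformly in $z$, under \eqref{Hardy BC}. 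Fubini and Morera then show that $\mathcal{H}_\phi f$ is holomorphic.

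\textbf{Upper bound in $(i)$.} For fixed $y>0$, Minkowski's integral inequality gives
\[
\Big(\int_{\mathbb{R}}|\mathcal{H}_\phi f(x+iy)|^p|x|^a dx\Big)^{1/p}\le\int_0^\infty \Big(\int_{\mathbb{R}}|f(\tfrac{x}{t}+i\tfrac{y}{t})|^p|x|^adx\Big)^{1/p}\frac{|\phi(t)|}{t}dt .
\]
The substitution $x=tu$ turns the inner norm into $t^{\frac{a+1}{p}}\big(\int|f(u+iy/t)|^p|u|^adu\big)^{1/p}$, which is at most $t^{\frac{a+1}{p}}\|f\|_{H^p_{|\cdot|^a}}$. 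Taking the supremum over $y$ yields
\[
\|\mathcal{H}_\phi\|\le\int_0^\infty t^{\frac{a+1}{p}-1}|\phi(t)|\,dt .
\]

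\textbf{Lower bound in $(i)$.} I will test on $f_\varepsilon(z)=(z+i)^{-\gamma}$ with $\gamma=\frac{a+1}{p}+\varepsilon$ and $\varepsilon\downarrow0$. These functions lie in $H^p_{|\cdot|^a}(\mathbb{U})$ with $\|f_\varepsilon\|^p\asymp\varepsilon^{-1}$, and their norm escapes to $|x|\to\infty$. A direct computation gives
\[
\mathcal{H}_\phi f_\varepsilon(z)=\int_0^\infty t^{\gamma-1}\phi(t)\,(z+it)^{-\gamma}\,dt .
\]
Writing $(z+it)^{-\gamma}=(z+i)^{-\gamma}\big(\tfrac{z+it}{z+i}\big)^{-\gamma}$, the ratio tends to $1$ when $|z|\gg t$. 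Splitting the $t$-integral into a compact range $[\delta,\delta^{-1}]$ and tails controlled by \eqref{Hardy BC}, I aim to show
\[
\Big\|\mathcal{H}_\phi f_\varepsilon-\Big(\int_0^\infty t^{\gamma-1}\phi(t)\,dt\Big)f_\varepsilon\Big\|=o(\|f_\varepsilon\|).
\]
The reason is that the discrepancy is supported, up to small error, on a bounded $x$-region whose contribution stays $O(1)$, while $\|f_\varepsilon\|\to\infty$. Letting $\varepsilon\to0$ and using dominated convergence on $\int t^{\gamma-1}\phi$ then gives the lower bound $\big|\int_0^\infty t^{\frac{a+1}{p}-1}\phi(t)\,dt\big|$. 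The main obstacle is making this $o(\|f_\varepsilon\|)$ estimate uniform, especially the tails $t\to0$ and $t\to\infty$ where $t^{\varepsilon}$ perturbs the weight. I would handle these by bounding them with $\int_{(0,\delta)\cup(\delta^{-1},\infty)}t^{\frac{a+1}{p}-1}|\phi|\,t^{\pm\varepsilon}\,dt$, using the upper-bound argument, for $\varepsilon$ small.

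\textbf{Part $(ii)$.} Assume $\phi\ge0$ and $\mathcal{H}_\phi$ bounded. I apply the same test functions with $\phi_N=\phi\chi_{[1/N,N]}$ inside the computation, which is legitimate by positivity and monotone convergence. For boundary points $x>R$ with $R$ large, $\arg(x+it)^{-\gamma}$ is uniformly small for $t\le N$. Hence
\[
\operatorname{Re}\,\mathcal{H}_\phi f_\varepsilon(x)\ge c\,|x+i|^{-\gamma}\int_{1/N}^{N}t^{\gamma-1}\phi(t)\,dt .
\]
Integrating this against $|x|^a$ over $x>R$ and comparing with $\|\mathcal{H}_\phi\|\,\|f_\varepsilon\|$, where the mass of $f_\varepsilon$ on $x>R$ is a fixed fraction of its norm as $\varepsilon\to0$, bounds $\int_{1/N}^{N}t^{\gamma-1}\phi\,dt$ uniformly in $N$. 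Fatou's lemma with $\varepsilon\to0$ then yields \eqref{Hardy BC}.
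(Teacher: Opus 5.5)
The paper gives no proof of this statement; it is quoted from Hung--Dy, so your argument has to stand on its own.

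\textbf{Part (i).} Your well-definedness step and the Minkowski upper bound are correct and standard. Note that you prove $\|\mathcal{H}_\phi\|\le\int_0^\infty t^{\frac{a+1}{p}-1}|\phi(t)|\,dt$, not the printed bound on $\|\mathcal{H}_\phi\|^p$. For $p>1$ the printed exponent is a misprint (replace $\phi$ by $c\phi$), so your form is the right one.

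For the lower bound you use exactly the test functions the paper uses in Section 3. Corollary \ref{bound from below} at $\xi=0$ is precisely this inequality, since $\widehat{k_{a,p}}(0)=\int_0^\infty t^{\frac{a+1}{p}-1}\phi(t)\,dt$. However, your comparison constant $\int_0^\infty t^{\gamma-1}\phi(t)\,dt$ need not exist. With $\alpha=\frac{a+1}{p}$, the kernel $\phi(t)=t^{-\alpha}(\log t)^{-2}\chi_{(e,\infty)}(t)$ satisfies \eqref{Hardy BC}, yet $\int_0^\infty t^{\alpha-1+\varepsilon}\phi(t)\,dt=+\infty$ for every $\varepsilon>0$. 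So neither your dominated-convergence step nor your tail bound carrying $t^{+\varepsilon}$ on $(\delta^{-1},\infty)$ is available.

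The fix is the paper's. Compare $\mathcal{H}_{\phi_\delta}f_\varepsilon$ with the $\varepsilon$-free constant $c_\delta=\int_\delta^{1/\delta}t^{\alpha-1}\phi\,dt$. The difference is $O_\delta(|z+i|^{-\gamma-1})+o(1)\,|f_\varepsilon(z)|$, hence $o(\|f_\varepsilon\|)$. Then absorb the tails with $\|\mathcal{H}_\phi-\mathcal{H}_{\phi_\delta}\|\le\int_{(0,\delta)\cup(1/\delta,\infty)}t^{\alpha-1}|\phi|\,dt$, which comes from your Minkowski bound and has no $t^{\varepsilon}$.

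For $p=\infty$ your $f_\varepsilon$ have bounded norm, so this mechanism breaks down. There $f\equiv 1$ is an exact eigenfunction, $\mathcal{H}_\phi 1=\int_0^\infty\phi(t)\,\frac{dt}{t}$, which gives both the lower bound and (ii) directly.

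\textbf{Part (ii): the genuine gap.} Discarding $t\notin[1/N,N]$ ``by positivity'' needs $\operatorname{Re}(x+it)^{-\gamma}\ge 0$ for all $t>0$, i.e.\ $\gamma\arg(x+it)\le\pi/2$. Since $\arg(x+it)\to\pi/2$ as $t\to\infty$, this holds only when $\gamma\le 1$. With $\gamma=\frac{a+1}{p}+\varepsilon$ it fails whenever $a+1\ge p$, already for $a=0$, $p=1$. For $t$ comparable to $x$, with $t/x$ slightly above $\tan\frac{\pi}{2\gamma}$, the real part is negative. So the contribution of $t>N$ can cancel your main term, and controlling it would need the very integrability you are trying to prove. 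Nor can you claim $\|\mathcal{H}_{\phi_N}\|\le\|\mathcal{H}_\phi\|$: the lattice argument would require applying $\mathcal{H}_\phi$ to the non-holomorphic function $|f|$. When $a+1<p$ your argument does work, taking $\varepsilon<1-\frac{a+1}{p}$ and using Fatou's lemma as $y\to 0$ to reach boundary values.

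In general, use a test function whose phase is invariant under dilations, e.g.\ $g_\varepsilon(z)=(z^{\alpha-\varepsilon}+z^{\alpha+\varepsilon})^{-1}$ with $0<\varepsilon<\min(\alpha,\frac14)$.
\begin{itemize}
\item One checks $|g_\varepsilon(z)|\le\min(|z|^{-(\alpha-\varepsilon)},|z|^{-(\alpha+\varepsilon)})$. Hence $\|g_\varepsilon\|^p\asymp\varepsilon^{-1}$, with a comparable share of the norm on $(0,\infty)$.
\item Since $\arg(z/t)=\arg z$, every value $g_\varepsilon(z/t)$, $t>0$, has argument in $[-(\alpha+\varepsilon)\arg z,0]$.
\item Hence Fatou as $y\to 0$ gives, for $x>0$,
\[
\liminf_{y\to 0}|\mathcal{H}_\phi g_\varepsilon(x+iy)|\ge\int_0^\infty g_\varepsilon(x/t)\,\phi(t)\,\frac{dt}{t}\ge N^{-\varepsilon}\Big(\int_{1/N}^{N}t^{\alpha-1}\phi(t)\,dt\Big)\,g_\varepsilon(x).
\]
\end{itemize}
Comparing with $\|\mathcal{H}_\phi\|\,\|g_\varepsilon\|$, then letting first $\varepsilon\to 0$ and then $N\to\infty$, yields \eqref{Hardy BC}.
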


We are ready to describe the spectrum of $\mathcal{H}_\phi$ on $H^p_{|\cdot|^a}(\mathbb{U})$.
\begin{proof}[Proof of Theorem \ref{Theorem spectrum weighted Hardy}]
For the sake of shortness we write $\hat{k}$ instead of $\widehat{k_{a,p}}$. 
Let $\lambda\not \in \overline{\hat{k}(\mathbb{R})}$. According to Proposition \ref{Theorem Wiener} and \cite[Lemma 1]{Boyd_1973}, there exists a function $A_{\lambda} \in L^1 (\mathbb{R})$ such that
$\lambda^{-1}(I+A_{\lambda}*)$ is the inverse of $(\lambda I -k*)$ on $L^{p}(\mathbb{R})$. Therefore, $\lambda^{-1}(I+U^{-1}A_{\lambda}*U)$ is the inverse of $(\lambda I -U^{-1}k*U)$ on $L^{p}(\mathbb{R}_{+}, x^a \,dx)$. But

$$
U^{-1}k*U=H_\phi\quad\mbox{and}\quad U^{-1}A_{\lambda}*U=H_{\psi_\lambda}
$$
with $\psi_{\lambda}(s)=A_{\lambda}(\ln s)s^{-\frac{a+1}{p}}$.  Observe that
\begin{align*}
\int_{0}^{\infty}|\psi_{\lambda}(s)|s^{\frac{a+1}{p}-1}\,ds&=\int_{0}^{\infty}|A_{\lambda}(\ln s)|\frac{1}{s}\,ds=\int_{\mathbb{R}}|A_{\lambda}(s)|\,ds<\infty.
\end{align*}
This implies that  $H_{\psi_\lambda}$ is bounded on $L^{p}(\mathbb{R}, |x|^a\,dx)$ 
%(and on $L^{p}(\mathbb{R}_{+}, x^a\,dx))$
, see Theorem \ref{Boundednes hausdorff in Lesbegue}, and that $\mathcal{H}_{\psi_\lambda}$ is bounded  on $H^{p}_{|\cdot|^{a}}(\mathbb{U})$, see Theorem \ref{Boundedness condition Hardy}. Moreover by \cite[Theorem 1.4]{Hung2025}  
$$
(\mathcal{H}_{\phi}f)^*=H_{\phi}f^*\quad \mbox{and}\quad  (\mathcal{H}_{\psi}f)^*=H_{\psi}f^*,
$$ for every $f\in H^{p}_{|\cdot|^{a}}(\mathbb{U})$. Thus 
\begin{align*}
 (A)&\quad (\lambda I-\mathcal{H}_\phi )(H^{p}_{|\cdot|^{a}}(\mathbb{U}))\subseteq H^{p}_{|\cdot|^{a}}(\mathbb{U}), \\
 (B)&\quad \frac{1}{\lambda}( I+\mathcal{H}_\psi) (H^{p}_{|\cdot|^{a}}(\mathbb{U}))\subseteq H^{p}_{|\cdot|^{a}}(\mathbb{U}). 
\end{align*}
Moreover, for every $f\in H^{p}_{|\cdot|^{a}}(\mathbb{U}) $
\begin{align*}
 &\|   \frac{1}{\lambda}(I+\mathcal{H}_{\psi_{\lambda}})(\lambda I-\mathcal{H}_{\phi})f-f\|_{H^{p}_{|\cdot|^{a}}(\mathbb{U})}\\
 &\quad \leq C\|  ( \frac{1}{\lambda}(I+\mathcal{H}_{\psi_{\lambda}})(\lambda I-\mathcal{H}_{\phi})f)^{*}-f^{*}\|_{L^{p}(\mathbb{R},|\cdot|^a dx)}\\
 &\quad =C\|  ( \frac{1}{\lambda}(I+H_{\psi_{\lambda}})(\lambda I-H_{\phi})f^*-f^{*}\|_{L^{p}(\mathbb{R},|\cdot|^a dx)}.
\end{align*}
 We consider the pair $(f_+^{*}, f_-^{*})$ defined as 
\[
f_{+}^{*}(x) = f^{*}(x)\chi_{(0,\infty)}(x), \quad f_{-}^{*}(x) = f^{*}(-x)\chi_{(-\infty,0)}(-x).
\]
Clearly,
\begin{align*}
    &\|( \frac{1}{\lambda}(I+H_{\psi_{\lambda}})(\lambda I-H_{\phi})f^*-f^{*}\|^{p}_{L^{p}(\mathbb{R},|\cdot|^a)}\\
    &\quad = \|( \frac{1}{\lambda}(I+H_{\psi_{\lambda}})(\lambda I-H_{\phi})f_{+}^{*}-f_{+}^{*}\|^{p}_{L^{p}(\mathbb{R}_{+},x^a\,dx)}\\
    & \qquad + \|( \frac{1}{\lambda}(I+H_{\psi_{\lambda}})(\lambda I-H_{\phi})f_{-}^{*}-f_{-}^{*}\|^{p}_{L^{p}(\mathbb{R}_{+},x^a\,dx)}=0.
\end{align*}

Similarly $\|   \frac{1}{\lambda}(\lambda I-\mathcal{H}_{\phi})(I+\mathcal{H}_{\psi_{\lambda}})f-f\|_{H^{p}_{|\cdot|^{a}}(\mathbb{U})}=0$. This implies that
 $$
 \frac{1}{\lambda}(I+\mathcal{H}_{\psi_{\lambda}})(\lambda I-\mathcal{H}_{\phi})=(\lambda I-\mathcal{H}_\phi)\frac{1}{\lambda}(I+\mathcal{H}_{\psi_{\lambda}})=I, \mbox{on } H^{p}_{|\cdot|^{a}}(\mathbb{U})).
 $$
 Consequently, $\lambda$ belongs to the resolvent set of ${\mathcal{H}_{\phi}}{\vert_{H^{p}_{|\cdot|^{a}}}}$ and, by Proposition \ref{subspace theorem R},
$$
\sigma(\mathcal{H}_{\phi}, H^{p}_{|\cdot|^{a}}(\mathbb{U}))\subset\overline{\hat{k}(\mathbb{R})}.
$$

For the reverse inclusion, let $\delta \in (0,1)$ and let $\phi_{\delta}(t) = \phi(t)\chi_{[\delta,1/\delta)}(t)$. Then 
$$
\|\mathcal{H}_{\phi}-\mathcal{H}_{\phi_{\delta}}\|_{H^p_{|\cdot|^a}}\leq \int_{0}^{\delta}|\phi(t)|t^{\frac{a+1}{p}-1}\,dt+\int_{\frac{1}{\delta}}^{\infty}|\phi(t)|t^{\frac{a+1}{p}-1}\,dt
$$
which goes to zero as $\delta$ tends to zero.
For any $0<\epsilon<1$ and $\xi\in\mathbb{R}$, we define the function
$$
f_{\epsilon,\xi}(z)=(z+i)^{-\frac{a+1}{p}-\epsilon+i\xi}.
$$
Since $e^{-|\xi|\pi}\leq|(z+i)^{i\xi}|\leq e^{|\xi|\pi}$, by \cite[Pag.17]{Hung2025} we have that
$$
\|f_{\epsilon,\xi}\|_{H^{p}_{|\cdot|^{a}}}\sim\|f_{\epsilon,0}\|_{H^{p}_{|\cdot|^{a}}}\sim \epsilon^{-1/p}<\infty,
$$
where the constants involved in the first comparison depend on $\xi$.
Let us consider
\[
\widehat{k_\delta}(\xi)=\int_{0}^{\infty}\phi_{\delta}(t)t^{\frac{a+1}{p}}\frac{dt}{t^{1+i\xi}}.
\]
For every $z = x + iy\in\mathbb{U}$, we have that 
\begin{align*}
\mathcal{H}_{\phi_{\delta}}f_{\epsilon,\xi}(z)-f_{\epsilon,\xi}(z)\widehat{k_\delta}(\xi)=&\mathcal{H}_{\phi_{\delta}}f_{\epsilon,\xi}(z)-f_{\epsilon,\xi}(z)\int_{0}^{\infty}\phi_{\delta}(t)t^{\frac{a+1}{p}}\frac{dt}{t^{1+i\xi}}\\
=&\int_{\delta}^{1/\delta}\left(\varphi_{\epsilon,\xi,z}(t)-\varphi_{\epsilon,\xi,z}(1)\right)\phi(t)t^{\frac{a+1}{p}}\frac{dt}{t^{1+i\xi}},
\end{align*}
where $\varphi_{\epsilon,\xi,z}(t)=t^{\epsilon}/ {(z+ti)^{\frac{1+a}{p}+\epsilon-i\xi}}$.
For every $t \in (\delta, 1/\delta)$ and $z \in \mathbb{U}$, the Lagrange mean value theorem implies that
\begin{align*}
   |\varphi_{\epsilon,\xi,z}(t)-\varphi_{\epsilon,\xi,z}(1)|\leq |t-1|\sup_{s\in (\delta,1/\delta)} |\varphi_{\epsilon,\xi,z}'(s)|.
\end{align*}
We calculate
\begin{align*}
    \varphi_{\epsilon,\xi,z}'(t)&=\frac{\epsilon t^{\epsilon-1}}{(z+ti)^{\frac{1+a}{p}+\epsilon-i\xi}}-i\frac{(\frac{1+a}{p}+\epsilon-i\xi)t^{\epsilon}}{(z+ti)^{\frac{1+a}{p}+1+\epsilon-i\xi}}\\
    &=\left(\frac{\epsilon t^{\epsilon-1}}{(z+ti)^{\frac{1+a}{p}+\epsilon}}-i\frac{(\frac{1+a}{p}+\epsilon-i\xi)t^{\epsilon}}{(z+ti)^{\frac{1+a}{p}+1+\epsilon}}\right)(z+it)^{i\xi}.
\end{align*}
Since $|(z+it)^{i\xi}|=e^{-\xi \arg{(z+it)}}\leq e^{|\xi|\pi}$, we have that
\begin{align*}
   |\varphi_{\epsilon,\xi,z}(t)-\varphi_{\epsilon,\xi,z}(1)|\leq (\frac{1}{\delta}-1)e^{|\xi|\pi}\left(\frac{\epsilon\delta^{\epsilon-1}}{|z+\delta i|^{\frac{a+1}{p}+\epsilon}}+\frac{(\frac{1+a}{p}+\epsilon+|\xi|)\delta^{-\epsilon}}{|z+\delta i|^{1+\frac{a+1}{p}+\epsilon}}\right).
\end{align*}
and 
\begin{align*}
   |\varphi_{\epsilon,\xi,z}(t)-\varphi_{\epsilon,\xi,z}(1)|\leq e^{|\xi|\pi}\left(\frac{\epsilon\delta^{-(2+\frac{a+1}{p})}}{|z+ i|^{\frac{a+1}{p}+\epsilon}}+\frac{(\frac{1+a}{p}+\epsilon+|\xi|)\delta^{-(2+2\epsilon+\frac{a+1}{p})}}{|z+ i|^{1+\frac{a+1}{p}+\epsilon}}\right),
\end{align*}
since $|z+\delta i|>\delta|z+i|$ for any $z\in\mathbb{U}$.
Consequently
\begin{align*}
&\frac{\|\mathcal{H}_{\phi_{\delta}}(f_{\epsilon,\xi})- \widehat{k_{\delta}}(\xi)f_{\epsilon,\xi} \|_{H^{p}_{|\cdot|^{a}}} }{\|f_{\epsilon,\xi}\|_{H^{p}_{|\cdot|^{a}}} }\\
&\qquad \qquad \leq C\int_{\delta}^{1/\delta}|\phi(t)| t^{\frac{a+1}{p}-1}\,dt\left[\epsilon\delta^{\delta^{-(2+\frac{a+1}{p})}}+\epsilon^{1/p}(\frac{1+a}{p}+\epsilon+|\xi|)\right]
\end{align*}
which goes to zero when $\epsilon$ tends to $0$.
More generally, let $\varepsilon>0$ small and pick $\delta>0$ such that $\|\mathcal{H}_{\phi}-\mathcal{H}_{\phi_{\delta}}\|_{H^{p}_{|\cdot|^{a}} }<\varepsilon$ and $|\hat{k}(\xi)-\widehat{k_\delta}(\xi)|<\varepsilon$. Then 
\begin{align*}
&\lim_{\epsilon\to 0}\frac{\|\mathcal{H}_{\phi}f_{\epsilon,\xi}- \hat{k}(\xi)f_{\epsilon,\xi} \|_{H^{p}_{|\cdot|^{a}}}} {\|f_{\epsilon,\xi}\|_{H^{p}_{|\cdot|^{a}} }}\\
&\quad \leq \lim_{\epsilon\to0}\left(\frac{\|\mathcal{H}_{\phi_{\delta}}(f_{\epsilon,\xi})- \widehat{k_\delta}(\xi)f_{\epsilon,\xi}\|_{H^{p}_{|\cdot|^{a}} }}{\|f_{\epsilon,\xi}\|_{H^{p}_{|\cdot|^{a}} }}+\|\mathcal{H}_{\phi}-\mathcal{H}_{\phi_{\delta}}\|_{H^{p}_{|\cdot|^{a}} }+ |\hat{k}(\xi)-\widehat{k_\delta}(\xi)|\right)\\
&\quad<2\varepsilon.
\end{align*}
Since $\varepsilon$ is arbitrary, this implies that $\hat{k}(\xi)$ is in the approximate point spectrum of $\mathcal{H}_{\phi}$, i.e 
$$
\sigma(\mathcal{H}_{\phi}, H^{p}_{|\cdot|^{a}}(\mathbb{U}))=\overline{\hat{k}(\mathbb{R})}.
$$
\begin{comment}
    
Since $k\in L^{1}(\mathbb{R})$, $\hat{k}(\xi)$ is continuous and vanishes at infinity. This implies that every point of  $\sigma(\mathcal{H}_{\phi}, H^{p}_{|\cdot|^{a}}(\mathbb{U}))$ is an accumulation point of $\sigma(\mathcal{H}_{\phi}, H^{p}_{|\cdot|^{a}}(\mathbb{U}))$ and that the range  of $\hat{k}$ is a curve with empty interior. Therefore every point of  $\sigma(\mathcal{H}_{\phi}, H^{p}_{|\cdot|^{a}}(\mathbb{U}))$ is an accumulation point of $\rho(\mathcal{H}_{\phi}, H^{p}_{|\cdot|^{a}}(\mathbb{U}))$. Thus 
by \cite[Remark 4.1]{OlivaMaza2023}
$$
\sigma(\mathcal{H}_{\phi}, H^{p}_{|\cdot|^{a}}(\mathbb{U}))=\sigma_{ess}(\mathcal{H}_{\phi}, H^{p}_{|\cdot|^{a}}(\mathbb{U})).
$$
\end{comment}

\end{proof}

\begin{comment}

For $-1<a<p-1$, the Hausdorff operator $\mathcal{H}_\phi$ commutes with the Cauchy projection on $L^p(\mathbb{R},|\cdot|^a)$, see \cite[Theorem 1.5]{Hung2025}. For this reason and due to the fact that $\sigma(\mathcal{H}_\phi, L^p(\mathbb{R},|\cdot|^a))=\overline{\widehat{k_{a,p}}(\mathbb{R})}$, one can easily prove that $ \sigma(\mathcal{H}_\phi, H^p_{|\cdot|^a})\subseteq\overline{\widehat{k_{a,p}}(\mathbb{R})}$ through Proposition \ref{subspace theorem}.
\end{comment}

The second part of the proof of Theorem \ref{Theorem spectrum weighted Hardy} provides an original lower bound for the norm of the Hausdorff operator.

\begin{corollary}\label{bound from below}
Let $\phi$ be a measurable function in $[0,\infty)$ which satisfies
$$
\int_{0}^{\infty} |\phi(t)|t^{\frac{a+1}{p}-1}\,dt<\infty.
$$
Then 
$$
\sup_{\xi \in {\mathbb{R}}} \left\vert \widehat{k_{a,p}}(\xi) \right\vert \leq\|\mathcal{H}_{\phi}\|_{H^{p}_{|\cdot|^{a}}},
$$
where $\widehat{k_{a,p}}(\xi)=\int^{\infty}_{0}\phi(t)t^{\frac{a+1}{p}}\frac{dt}{t^{1+i\xi}}$. 
\end{corollary}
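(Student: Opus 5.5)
The bound follows from two facts: the spectral radius of a bounded operator never exceeds its norm, and, by the second half of the proof of Theorem \ref{Theorem spectrum weighted Hardy}, every value $\widehat{k_{a,p}}(\xi)$ is an approximate eigenvalue of $\mathcal{H}_\phi$ on $H^{p}_{|\cdot|^{a}}(\mathbb{U})$. So I would not redo any estimates; I would extract the approximate eigenvectors from that proof and compare norms directly.

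Fix $\xi\in\mathbb{R}$. The reverse-inclusion argument in the proof of Theorem \ref{Theorem spectrum weighted Hardy} builds the functions $f_{\epsilon,\xi}(z)=(z+i)^{-\frac{a+1}{p}-\epsilon+i\xi}$. These are nonzero elements of $H^{p}_{|\cdot|^{a}}(\mathbb{U})$, and they satisfy
\[
\lim_{\epsilon\to0}\frac{\|\mathcal{H}_{\phi}f_{\epsilon,\xi}-\widehat{k_{a,p}}(\xi)f_{\epsilon,\xi}\|_{H^{p}_{|\cdot|^{a}}}}{\|f_{\epsilon,\xi}\|_{H^{p}_{|\cdot|^{a}}}}=0 .
\]
Set $g_\epsilon=f_{\epsilon,\xi}/\|f_{\epsilon,\xi}\|_{H^{p}_{|\cdot|^{a}}}$, so each $g_\epsilon$ is a unit vector. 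The reverse triangle inequality then gives
\[
|\widehat{k_{a,p}}(\xi)|=\|\widehat{k_{a,p}}(\xi)g_\epsilon\|\le \|\mathcal{H}_\phi g_\epsilon\|+\|\mathcal{H}_\phi g_\epsilon-\widehat{k_{a,p}}(\xi)g_\epsilon\|\le \|\mathcal{H}_\phi\|_{H^{p}_{|\cdot|^{a}}}+o(1).
\]
Letting $\epsilon\to0$ yields $|\widehat{k_{a,p}}(\xi)|\le\|\mathcal{H}_\phi\|_{H^{p}_{|\cdot|^{a}}}$, and taking the supremum over $\xi$ finishes the proof. Boundedness of $\mathcal{H}_\phi$ under the integrability hypothesis is provided by Theorem \ref{Boundedness condition Hardy}.

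A shorter route skips the approximate eigenvectors entirely. Theorem \ref{Theorem spectrum weighted Hardy} gives $\widehat{k_{a,p}}(\mathbb{R})\subseteq\sigma(\mathcal{H}_\phi,H^{p}_{|\cdot|^{a}}(\mathbb{U}))$. The inequality then follows from the standard bound
\[
\sup_{\lambda\in\sigma(T)}|\lambda|=r(T)\le\|T\|
\]
valid for any bounded operator $T$. Here the closure of $\widehat{k_{a,p}}(\mathbb{R})$ does not change the supremum.

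The only point needing care is that the approximate-eigenvector estimate is genuinely available for every fixed $\xi$. In that proof the constants depend on $\xi$ and $\delta$, but they are fixed before $\epsilon\to0$, so the limit is legitimate. I also need that $\widehat{k_{a,p}}(\xi)$ is well defined, which holds because $k_{a,p}\in L^1(\mathbb{R})$ by the hypothesis. Once the spectral inclusion is granted, there is no real obstacle.
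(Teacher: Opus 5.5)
Your main argument is the paper's proof. You normalize the test functions $f_{\epsilon,\xi}$ from the second half of the proof of Theorem~\ref{Theorem spectrum weighted Hardy}, apply the ordinary triangle inequality (not the reverse one) to get $|\widehat{k_{a,p}}(\xi)|\le\|\mathcal{H}_\phi\|_{H^{p}_{|\cdot|^{a}}}+o(1)$, let $\epsilon\to0$ with $\xi$ fixed, and take the supremum over $\xi$. Your alternative via $r(\mathcal{H}_\phi)\le\|\mathcal{H}_\phi\|$ and the inclusion $\widehat{k_{a,p}}(\mathbb{R})\subseteq\sigma(\mathcal{H}_\phi,H^{p}_{|\cdot|^{a}}(\mathbb{U}))$ is also valid, though it rests on the same approximate-eigenvector estimate.
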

\begin{proof}
It is true that
\begin{align*}
 | \widehat{k_{a,p}}(\xi)|&=\frac{\| \widehat{k_{a,p}}(\xi)f_{\epsilon,\xi}\|_{H^p_{|\cdot|^a}}}{\|f_{\epsilon,\xi}\|_{H^p_{|\cdot|^a}}}\leq   \frac{\| \mathcal{H}_{\phi}f_{\epsilon,\xi}\|_{H^p_{|\cdot|^a}}} {\|f_{\epsilon,\xi}\|_{H^p_{|\cdot|^a}}}+ \frac{\|\mathcal{H}_{\phi}f_{\epsilon,\xi}- \hat{k}(\xi)f_{\epsilon,\xi} \|_{H^{p}_{|\cdot|^{a}}}} {\|f_{\epsilon,\xi}\|_{H^{p}_{|\cdot|^{a}} }}\\
 &\leq \|\mathcal{H}_\phi\|_{H^{p}_{|\cdot|^{a}} }+\frac{\|\mathcal{H}_{\phi}f_{\epsilon,\xi}- \hat{k}(\xi)f_{\epsilon,\xi} \|_{H^{p}_{|\cdot|^{a}}}} {\|f_{\epsilon,\xi}\|_{H^{p}_{|\cdot|^{a}} }}.
\end{align*}
The second term, by the proof of the previous Theorem, goes to zero as $\epsilon$ vanishes, for every fixed $\xi$. This proves that   $| \widehat{k_{a,p}}(\xi)|\leq \|\mathcal{H}_\phi\|_{H^{p}_{|\cdot|^{a}} }$.
 \end{proof}

 As an immediate corollary of Proposition \ref{norm hausdorf L^2} and Corollary \ref{bound from below}, we have that $\|\mathcal{H}_\phi\|_{H^2_{|\cdot|^a}}=\sup_{\xi \in {\mathbb{R}}} \left\vert \widehat{k_{a,2}}(\xi) \right\vert $. The lower estimate provided in Corollary \ref{bound from below} may be higher than the lower bound provided in Theorem \ref{Boundedness condition Hardy}.

\medskip
%%%%%%%%%%%%%%%%%%%%%%%%%%%

\section{Proof of Theorem \ref{Theorem spectrum Bergman}}
Let $\phi$ be a measurable function on $(0, \infty)$. We consider  bounded Hausdorff operator $\mathcal{H}_\phi$ on the weighted Bergman spaces of the upper half plane. Hung and Dy \cite{Hung2025} characterized its boundedness as follows.

\begin{theorem}[{\cite[Theorem. 1.1 ]{Hung2025}}]\label{boundedness condition Bergman}
Let $1\leq p<\infty$ and $a > 0$ and let $\phi$ be a measurable function on $(0, \infty)$.
\begin{enumerate}
    \item[$(i)$] If $\phi$ satisfies the condition
    \begin{equation}\label{Boundedness condition Bergman}
    \int_{0}^{\infty} t^{\frac{a+1}{p}-1} |\phi(t)| \, dt < \infty,
    \end{equation}
    then $\mathcal{H}_{\phi}$ is bounded on $A^{p}_{a}(\mathbb{U})$. Moreover,
    \[
    \left| \int_{0}^{\infty} t^{\frac{a+1}{p}-1} \phi(t) \, dt \right| \leq \|\mathcal{H}_{\phi}\|^p_{A^{p}_{a}} \leq \int_{0}^{\infty} t^{\frac{a+1}{p}-1} |\phi(t)| \, dt.
    \]
    \item[$(ii)$] Conversely, if $\phi \geq 0$ and $\mathcal{H}_{\phi}$ is bounded on $A^{p}_{a}(\mathbb{U})$, then condition \eqref{Boundedness condition Bergman} holds.
\end{enumerate}
\end{theorem}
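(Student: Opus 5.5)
The plan is to treat the dilation $f\mapsto f(\cdot/t)$ as an almost isometry of $A^{p}_{a}(\mathbb{U})$ and to use Minkowski's integral inequality for the upper bound. For the lower bounds in $(i)$ and $(ii)$ I will use the test functions $f_{\epsilon}(z)=(z+i)^{-\frac{a+1}{p}-\epsilon}$, as in the proof of Theorem \ref{Theorem spectrum weighted Hardy} with $\xi=0$. One remark on the statement: the lower bound is proved below as $\bigl|\int_0^\infty t^{\frac{a+1}{p}-1}\phi(t)\,dt\bigr|\le\|\mathcal{H}_\phi\|_{A^p_a}$. I read the exponent $p$ on the norm as a typo, since the upper bound is linear in $\|\mathcal{H}_\phi\|$.

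\emph{Well-definedness and the upper bound.} Fix $t>0$ and put $D_tf(z)=f(z/t)$. The substitution $z=tw$ gives $dA_a(z)=t^{a+1}dA_a(w)$, so $\|D_tf\|_{A^p_a}=t^{\frac{a+1}{p}}\|f\|_{A^p_a}$. The standard pointwise estimate $|f(x+iy)|\le C y^{-\frac{a+1}{p}}\|f\|_{A^p_a}$, obtained from the sub-mean value property on a disc of radius $y/2$, shows that $\int_0^\infty |f(z/t)||\phi(t)|\,dt/t$ converges locally uniformly in $z$ under \eqref{Boundedness condition Bergman}. Hence $\mathcal{H}_\phi f$ is holomorphic. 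Minkowski's integral inequality then gives
\[
\|\mathcal{H}_\phi f\|_{A^p_a}\le\int_0^\infty |\phi(t)|\,t^{-1}\|D_tf\|_{A^p_a}\,dt=\Bigl(\int_0^\infty|\phi(t)|t^{\frac{a+1}{p}-1}dt\Bigr)\|f\|_{A^p_a}.
\]

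\emph{Lower bound in $(i)$.} The function $f_\epsilon$ lies in $A^p_a(\mathbb{U})$ because $\int_{\mathbb{U}}|z+i|^{-(a+1)-p\epsilon}y^{a-1}\,dA<\infty$, and its norm blows up like $\epsilon^{-1/p}$ as $\epsilon\to0$. I then repeat the truncation and mean-value argument of Section 3 verbatim:
\begin{itemize}
\item replace $\phi$ by $\phi_\delta=\phi\chi_{[\delta,1/\delta)}$, which changes the operator by a small amount in norm by the upper bound just proved;
\item bound $\mathcal{H}_{\phi_\delta}f_\epsilon-\widehat{k_\delta}(0)f_\epsilon$ pointwise by $C_\delta\bigl(\epsilon|z+i|^{-\frac{a+1}{p}-\epsilon}+|z+i|^{-1-\frac{a+1}{p}-\epsilon}\bigr)$.
\end{itemize}
The second term has $A^p_a$-norm bounded independently of $\epsilon$, while $\|f_\epsilon\|\to\infty$. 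Consequently $\|\mathcal{H}_\phi f_\epsilon-\widehat{k_{a,p}}(0)f_\epsilon\|/\|f_\epsilon\|\to0$, which yields $|\widehat{k_{a,p}}(0)|\le\|\mathcal{H}_\phi\|$. This is exactly the claimed lower bound, since $\widehat{k_{a,p}}(0)=\int_0^\infty\phi(t)t^{\frac{a+1}{p}-1}dt$.

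\emph{Part $(ii)$: the main obstacle.} Here $\phi\ge0$ and $\mathcal{H}_\phi$ is bounded, and I want to show $\int\phi_\delta t^{\frac{a+1}{p}-1}dt\le C\|\mathcal{H}_\phi\|$ uniformly in $\delta$; monotone convergence then finishes. The difficulty is that $\|\mathcal{H}_{\phi_\delta}\|\le\|\mathcal{H}_\phi\|$ is not automatic, because $f_\epsilon$ is complex-valued.

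\begin{itemize}
\item \emph{Positivity in a cone.} I will work instead with $g_\epsilon(z)=(-i(z+i))^{-\frac{a+1}{p}-\epsilon}$, a unimodular multiple of $f_\epsilon$. Restrict to the cone $\Gamma_\eta=\{|x|<\eta y\}$. There, for every $t$, $|\arg(-i(z/t+i))|=|\arg(y/t+1-ix/t)|\le\arctan\eta$. Choosing $\eta$ small (depending on $a,p$) gives $|\arg g_\epsilon(z/t)|\le\pi/4$, hence $\operatorname{Re} g_\epsilon(z/t)\ge\tfrac{1}{\sqrt2}|g_\epsilon(z/t)|$.
\item \emph{Domination.} For $z\in\Gamma_\eta$ this yields
\[
|\mathcal{H}_\phi g_\epsilon(z)|\ge\tfrac1{\sqrt2}\int\phi(t)|g_\epsilon(z/t)|\,\frac{dt}{t}\ge\tfrac1{\sqrt2}|\mathcal{H}_{\phi_\delta}g_\epsilon(z)|.
\]
\item \emph{The cone carries a fixed fraction of the norm.} By the homogeneity of $|z|^{-(a+1)-p\epsilon}$ at infinity, as $\epsilon\to0$ the norm $\|g_\epsilon\|^p$ concentrates at large $|z|$. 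Its restriction to $\Gamma_\eta$ is therefore asymptotically a fixed fraction $c_\eta>0$ of the total, namely the angular integral of $(\sin\theta)^{a-1}$ over the cone divided by that over $(0,\pi)$.
\end{itemize}

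Combining these with the approximate-eigenvector estimate for $\phi_\delta$ from $(i)$, which holds on $\Gamma_\eta$ as well since it is a norm estimate, gives
\[
\Bigl(\int\phi_\delta t^{\frac{a+1}{p}-1}dt\Bigr)c_\eta^{1/p}\|g_\epsilon\|\lesssim\|\mathcal{H}_\phi g_\epsilon\|_{L^p(\Gamma_\eta,dA_a)}+o(\|g_\epsilon\|)\le\|\mathcal{H}_\phi\|\|g_\epsilon\|+o(\|g_\epsilon\|).
\]
Letting $\epsilon\to0$ and then $\delta\to0$ proves \eqref{Boundedness condition Bergman}. The delicate points I expect are the angular bound on the arguments uniformly in $t$, and checking that the error terms are genuinely $o(\|g_\epsilon\|)$ when restricted to the cone.
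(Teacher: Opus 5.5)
The paper gives no proof of this statement. It imports it from \cite[Theorem 1.1]{Hung2025} and uses it as a black box in Section 4, both for $\|\mathcal{H}_\phi-\mathcal{H}_{\phi_\delta}\|_{A^p_a}\to0$ and for the boundedness of $\mathcal{H}_{\psi_\lambda}$. Your argument is therefore a self-contained alternative, and it is correct.

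Your reading of the exponent is right. As printed, the two-sided inequality with $\|\mathcal{H}_\phi\|^p$ is not homogeneous under $\phi\mapsto c\phi$ and fails for $p>1$, so only the first power makes sense.

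\textbf{Part $(i)$.} The Minkowski bound via $\|f(\cdot/t)\|_{A^p_a}=t^{\frac{a+1}{p}}\|f\|_{A^p_a}$ is exact. Your lower bound is precisely the approximate-eigenvector argument the paper runs in Sections 3 and 4, specialised to $\xi=0$. Because you prove the upper bound first, there is no circularity; the paper's own Section 4 argument, by contrast, relies on the upper bound of this very theorem. Running your argument with $f_{\epsilon,\xi}$ instead of $f_\epsilon$ gives the stronger bound $\sup_\xi|\widehat{k_{a,p}}(\xi)|\le\|\mathcal{H}_\phi\|_{A^p_a}$, which the paper records after the proof of Theorem \ref{Theorem spectrum Bergman}.

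\textbf{Part $(ii)$.} Nothing in the paper corresponds to this part, and your cone argument is sound.
\begin{enumerate}
\item Put $w=1+(y-ix)/t$. On $\Gamma_\eta$ you have $|\operatorname{Im}w|/\operatorname{Re}w=|x|/(t+y)<\eta$ uniformly in $t>0$. Hence $(\tfrac{a+1}{p}+1)\arctan\eta\le\pi/4$ gives the pointwise domination $|\mathcal{H}_{\phi_\delta}g_\epsilon|\le\sqrt2\,|\mathcal{H}_\phi g_\epsilon|$ on $\Gamma_\eta$.
\item The cone asymptotically carries the fraction $c_\eta$ of $\|g_\epsilon\|^p$, using $|z|\le|z+i|\le|z|+1$.
\item The error term restricted to the cone is trivially bounded by its full $A^p_a$-norm.
\end{enumerate}
This neatly sidesteps the fact that $\|\mathcal{H}_{\phi_\delta}\|\le\|\mathcal{H}_\phi\|$ is unavailable for holomorphic test functions.

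One small point needs adding. $\widehat{k_\delta}(0)$ and the approximate-eigenvector estimate for $\mathcal{H}_{\phi_\delta}$ require $\int_\delta^{1/\delta}\phi(t)\,dt<\infty$, which is not among the hypotheses. It follows at once at $z=iy$: there $g_\epsilon(iy/t)=(1+y/t)^{-\frac{a+1}{p}-\epsilon}>0$, so finiteness of $\mathcal{H}_\phi g_\epsilon(iy)$ forces local integrability of $\phi$. Alternatively, truncate at height $M$ as well, since $0\le\min(\phi,M)\chi_{[\delta,1/\delta)}\le\phi$ preserves the domination.

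Finally, your constant $\sqrt2\,c_\eta^{-1/p}$ in $(ii)$ is not sharp, but this does not matter. Once $(ii)$ gives finiteness, $(i)$ applies and yields the exact norm $\|\mathcal{H}_\phi\|_{A^p_a}=\int_0^\infty t^{\frac{a+1}{p}-1}\phi(t)\,dt$ for $\phi\ge0$.
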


We are ready to describe the spectrum of $\mathcal{H}_\phi$ on $A^p_a$.

\begin{proof}[Proof of Theorem \ref{Theorem spectrum Bergman}]

If $f\in L^{p}_{a}(\mathbb{U})$, then 
\begin{align*}
\|f\|_{L^{p}_{a}(\mathbb{U})}^{p} 
&=\int_{0}^{\pi} (\sin\theta)^{a}\|f_{\theta}\|^{p}_{L^{p}(\mathbb{R}_{+}r^{a}\,dr)}d\theta,
\end{align*}
where $f_{\theta}(r)=f(re^{i\theta})$.
Thus $f\in L^{p}_{a}(\mathbb{U})$ implies that for almost every $\theta\in[0,\pi]$, the function $f_\theta\in L^{p}(\mathbb{R}_{+},r^{a}\,dr)$.
Observe that $(\mathcal{H}_{\phi}f)_{\theta}=\mathcal{H}_{\phi}f_{\theta}$.

For the sake of shortness we write $\hat{k}$ instead of $\widehat{k_{a,p}}$.
Let $\lambda\not \in \overline{\hat{k}(\mathbb{R})} $. According to Proposition \ref{Theorem Wiener},  there exists a function $A_{\lambda}, \in L^1 (\mathbb{R})$ such that
$\lambda^{-1}(I+A_{\lambda}*)$ is the inverse of $(\lambda I -k*)$ on $L^{p}(\mathbb{R})$. Therefore the operator $\lambda^{-1}(I+U^{-1}A_{\lambda}*U)$ is the inverse of $(\lambda I -U^{-1}k*U)$ on $L^{p}(\mathbb{R}_{+}, r^{a} \,dr)$. Since $U^{-1}k*U=\mathcal{H}_\phi$, $U^{-1}A_{\lambda}*U=\mathcal{H}_{\psi_\lambda}$ with $\psi_{\lambda}(s)=A_{\lambda}(\ln s)s^{-\frac{a+1}{p}}$, and 
\begin{align*}
\int_{0}^{\infty}|\psi_{\lambda}(s)|s^{\frac{a+1}{p}-1}\,ds&=\int_{0}^{\infty}|A_{\lambda}(\ln s)|\frac{1}{s}\,ds\\
&=\int_{\mathbb{R}}|A_{\lambda}(s)|\,ds<\infty,
\end{align*}
we have that  $\mathcal{H}_{\psi_\lambda}$ is bounded on spaces $L^{p}(\mathbb{R}_{+}, r^{a}\,dr)$, see Theorem \ref{Boundednes hausdorff in Lesbegue}, on $L^{p}_{a}(\mathbb{U})$ (due to a simple application of Minkowsky inequality),  and on $A^{p}_{a}(\mathbb{U})$, see Theorem \ref{boundedness condition Bergman}.
Thus 
\begin{align*}
 (A)&\quad (\lambda I-\mathcal{H}_\phi )(L^{p}_{a}(\mathbb{U}))\subseteq L^{p}_{a}(\mathbb{U}), \\
 (B)&\quad \frac{1}{\lambda}( I+\mathcal{H}_\psi)( A^{p}_{a}(\mathbb{U}))\subseteq A^{p}_{a}(\mathbb{U}). 
\end{align*}

Moreover $\frac{1}{\lambda}(I+\mathcal{H}_{\psi_{\lambda}})(\lambda I-\mathcal{H}_{\phi})f_{\theta}=(\lambda I-\mathcal{H}_\phi)\frac{1}{\lambda}(I+\mathcal{H}_{\psi_{\lambda}})f_{\theta}=f_{\theta}$ for each $\theta\in[0,\pi]$. This implies that

\begin{align*}
\|  \frac{1}{\lambda}(I&+\mathcal{H}_{\psi_{\lambda}})(\lambda I-\mathcal{H}_{\phi})f-f\|_{L^{p}_{a}(\mathbb{U})}\\
&= \int_{0}^{\pi} (\sin\theta)^{a}\|(\frac{1}{\lambda}(I+\mathcal{H}_{\psi_{\lambda}})(\lambda I-\mathcal{H}_{\phi})f-f)_{\theta}\|^{p}_{L^{p}(\mathbb{R}_{+}r^{a}\,dr)}d\theta\\
&=\int_{0}^{\pi} (\sin\theta)^{a}\|(\frac{1}{\lambda}(I+\mathcal{H}_{\psi_{\lambda}})(\lambda I-\mathcal{H}_{\phi})f)_{\theta}-f_{\theta}\|^{p}_{L^{p}(\mathbb{R}_{+}r^{a}\,dr)}d\theta\\
&=\int_{0}^{\pi} (\sin\theta)^{a}\|\frac{1}{\lambda}(I+\mathcal{H}_{\psi_{\lambda}})(\lambda I-\mathcal{H}_{\phi})f_{\theta}-f_{\theta}\|^{p}_{L^{p}(\mathbb{R}_{+}r^{a}\,dr)}d\theta\\
&=0,
\end{align*}
and, with similar computations, that 
$$
\|  \frac{1}{\lambda}(I+\mathcal{H}_{\phi})(\lambda I-\mathcal{H}_{\psi_{\lambda}})f-f\|_{L^{p}_{a}(\mathbb{U})}=0.
$$
Since
$$
 \frac{1}{\lambda}(I+\mathcal{H}_{\phi})(\lambda I-\mathcal{H}_{\psi_{\lambda}})= \frac{1}{\lambda}(I+\mathcal{H}_{\psi_{\lambda}})(\lambda I-\mathcal{H}_{\phi})=I
$$
on $L^{p}_{a}(\mathbb{U})$, by the inclusion relations $(A)$ and $(B)$, the point $\lambda$ belongs to the resolvent sets of $\rho(\mathcal{H}_\phi,L^{p}_{a}(\mathbb{U}))$  and   $\rho(\mathcal{H}_\phi,A^{p}_{a}(\mathbb{U}))$. Thus
$$
\sigma(\mathcal{H}_{\phi}, L^{p}_{a}(\mathbb{U}))\subset \overline{\hat{k}(\mathbb{R})}\quad \mbox{and}\quad \sigma(\mathcal{H}_{\phi}, A^{p}_{a}(\mathbb{U}))\subset \overline{\hat{k}(\mathbb{R})}.
$$
For the reverse inclusion, let $\delta \in (0,1)$ and let $\phi_{\delta}(t) = \phi(t)\chi_{[\delta,1/\delta)}(t)$. Then 
$
\|\mathcal{H}_{\phi}-\mathcal{H}_{\phi_{\delta}}\|_{A^p_\alpha}
$
goes to zero as $\delta$ tends to zero.
For any $0<\epsilon<1$ and $\xi\in\mathbb{R}$, we define the function
$$
f_{\epsilon,\xi}(z)=(z+ i)^{-\frac{a+1}{p}-\epsilon+i\xi}.
$$
Since $e^{-|\xi|\pi}\leq|(z+\epsilon i)^{i\xi}|\leq e^{|\xi|\pi}$, by \cite[Pag.10]{Hung2025} we have that
$$
\|f_{\epsilon,\xi}\|_{A^{p}_{{a}}}\sim\|f_{\epsilon,0}\|_{A^{p}_{{a}}}\sim \epsilon^{-1/p}<\infty,
$$
where the constants involved in the first comparison depend on $\xi$. With the same computations done in the proof of Theorem \ref{Theorem spectrum weighted Hardy}, we obtain that
\begin{align*}
&\frac{\|\mathcal{H}_{\phi_{\delta}}(f_{\epsilon,\xi})- \widehat{k_{\delta}}(\xi)f_{\epsilon,\xi} \|_{A^{p}_{{a}}} }{\|f_{\epsilon,\xi}\|_{A^{p}_{{a}}}}\\
&\qquad \qquad \leq C\int_{\delta}^{1/\delta}|\phi(t)| t^{\frac{a+1}{p}-1}\,dt\left[\epsilon\delta^{\delta^{-(2+\frac{a+1}{p})}}+\epsilon^{1/p}(\frac{1+a}{p}+\epsilon+|\xi|)\right]
\end{align*}
which goes to zero when $\epsilon$ tends to $0$.
More generally, let $\varepsilon>0$ small and pick $\delta>0$ such that $\|\mathcal{H}_{\phi}-\mathcal{H}_{\phi_{\delta}}\|_{A^{p}_{a} }<\varepsilon$ and $|\hat{k}(\xi)-\widehat{k_\delta}(\xi)|<\varepsilon$. Then 
\begin{align*}
&\lim_{\epsilon\to 0}\frac{\|\mathcal{H}_{\phi}f_{\epsilon,\xi}- \hat{k}(\xi)f_{\epsilon,\xi} \|_{A^{p}_{a}}} {\|f_{\epsilon,\xi}\|_{A^{p}_{a}}}\\
&\quad \leq \lim_{\epsilon\to0}\left(\frac{\|\mathcal{H}_{\phi_{\delta}}(f_{\epsilon,\xi})- \widehat{k_\delta}(\xi)f_{\epsilon,\xi}\|_{A^{p}_{a} }}{\|f_{\epsilon,\xi}\|_{A^{p}_{a} }}+\|\mathcal{H}_{\phi}-\mathcal{H}_{\phi_{\delta}}\|_{A^{p}_{a} }+ |\hat{k}(\xi)-\widehat{k_\delta}(\xi)|\right)\\
&\quad<2\varepsilon.
\end{align*}
Since $\varepsilon$ is arbitrary, this implies that $\hat{k}(\xi)$ is in the approximate point spectrum of $\mathcal{H}_{\phi}$, i.e 
$$
\sigma(\mathcal{H}_{\phi}, L^{p}_{a}(\mathbb{U}))=\sigma(\mathcal{H}_{\phi}, A^{p}_{a}(\mathbb{U}))=\overline{\hat{k}(\mathbb{R})}.
$$

\end{proof}

As already observed for the weighted Hardy spaces, the second part of the proof of Theorem \ref{Theorem spectrum Bergman} provides an original lower bound for the norm of the Hausdorff operator. Indeed
$$
\sup_{\xi \in {\mathbb{R}}} \left\vert \widehat{k_{a,p}}(\xi) \right\vert \leq\|\mathcal{H}_{\phi}\|_{A^{p}_{{a}}},
$$
where $\widehat{k_{a,p}}(\xi)=\int^{\infty}_{0}\phi(t)t^{\frac{a+1}{p}}\frac{dt}{t^{1+i\xi}}$. 

\medskip
%%%%%%%%%%%%%%%%%%%%%%%%%%%
\section{Cesaro Type operators: proof of Corollary \ref{last corollary}}

We recover and extend the results of \cite[Theorem 3.4]{Ballamoole2015} regarding the spectra of the Ces\'{a}ro-like operators. Let $\nu \in \mathbb{C}$ be such that $\mbox{Re}\nu > 0$. We recall that the Ces\'{a}ro-like operator is defined as
$$
C_{\nu}f(z)=\frac{1}{z^{\nu}}\int_{0}^{z}\zeta^{\nu-1}f(\zeta)\,d\zeta.
$$

\begin{proof}[Proof of Corollary \ref{last corollary}]
The Cesaro operator $\mathcal{C}_{\nu}$ is a specific Hausdorﬀ operator $\mathcal{H}_{\phi}$ associated with the kernel $\phi_{\mathcal{C}_{\nu}}(t)=\frac{1}{t^{\nu}} \chi_{[1,\infty)}(t)$. We show that  $\mathcal{C}_{\nu}$ satisfies the hypothesis of Theorems \ref{Theorem spectrum weighted Hardy} and \ref{Theorem spectrum Bergman}. Indeed let $1+a<p\cdot \mbox{Re } \nu$, then
\begin{align*}
\|C_{\nu}\|&=\int_{0}^{\infty}|\phi_{\mathcal{C}_{\nu}}(t)|t^{\frac{a+1}{p}-1}\,dt=\int_{1}^{\infty}t^{\frac{a+1}{p}-\mbox{Re}\nu-1}\,dt\\
&=\left[\frac{1}{\frac{a+1}{p}-\mbox{Re}\nu}t^{\frac{a+1}{p}-\mbox{Re }\nu}\right]_{1}^{\infty}=\frac{p}{p\mbox{Re}\nu -a-1}<\infty. 
\end{align*}
\begin{comment}
\begin{align*}
\int_{0}^{\infty}|\phi_{\mathcal{C}}(t)|t^{\frac{a+1}{p}-1}\,dt
&=\int_{1}^{\infty}t^{\frac{a+1}{p}-2}\,dt\\
&=\left[\frac{1}{\frac{a+1}{p}-1}t^{\frac{a+1}{p}-1}\right]_{1}^{\infty}\\
&=\frac{p}{p-a-1}<\infty. 
\end{align*}
\end{comment}
Thus $\mathcal{C}_{\nu}$ is bounded in the corresponding spaces. We observe that
\begin{align*}
\widehat{k^{\mathcal{C}_{\nu}}_{a,p}}(\xi)&=\int^{\infty}_{0}\phi_{
\mathcal{C}_{\nu}}(t)t^{\frac{a+1}{p}}\frac{dt}{t^{1+i\xi}}=\int_{1}^{\infty}t^{\frac{a+1}{p}-1-\nu-i\xi}\,dt\\
&=\int_{0}^{\infty}e^{(\frac{a+1}{p}-\nu-i\xi)t}\,dt=\frac{1}{i\xi-\frac{a+1}{p}+\nu}.
\end{align*}
\begin{comment}
 \begin{align*}
\widehat{k_{\mathcal{C},a,p}}(\xi)&=\int^{\infty}_{0}\phi_{
\mathcal{C}}(t)t^{\frac{a+1}{p}}\frac{dt}{t^{1+i\xi}}\\
&=\int_{1}^{\infty}t^{\frac{a+1}{p}-2+i\xi}\,dt\\
&=\int_{0}^{\infty}e^{(\frac{a+1}{p}-1-i\xi)t}\,dt\\
&=\frac{1}{i\xi-\frac{a+1}{p}+1}.
\end{align*}   
\end{comment}
Thus its spectrum is the closure of the image of $i\mathbb{R}$ under the linear fractional map $(z-\frac{a+1}{p}+\nu)^{-1}$. The description of the essential spectrum in each case follows by the discussion of \cite[Remark 4.1]{OlivaMaza2023}   
\end{proof}

\medskip
\section{Hausdorff operators defined by measures}

Let $M(\mathbb{R}_+)$, $M(\mathbb{R})$ denote the space of positive, regular Borel measures respectively on the semi-axis $(0, \infty)$ and on the real line $\mathbb{R}$. For $\mu \in M(\mathbb{R}_+)$, we define the Hausdorff operator $\mathcal{H}_{\mu}$ on the weighted Hardy space $H^{p}_{|\cdot|^{a}}(\mathbb{U})$ as
\[
\mathcal{H}_{\mu}f(z) = \int_{0}^{\infty} f\left(\frac{z}{t}\right) \frac{d\mu(t)}{t},
\]
and similarly for $f \in L^{p}(\mathbb{R}, |\cdot|^{a})$ as
\[
H_{\mu}f(x) = \int_{0}^{\infty} f\left(\frac{x}{t}\right) \frac{d\mu(t)}{t}.
\]
Note that for these operators to be well-defined, $\mu$ cannot have a point mass at the origin. Following \cite[Lemma 4.4 (ii)]{Hung2025}, we observe that
\begin{align*}
\left|\mathcal{H}_{\mu}f(z)\right| &\leq \int_{0}^{\infty} \left|f\left(\frac{z}{t}\right)\right| \frac{d|\mu|(t)}{t} \\ 
&\leq w_{a}(B(x,y))^{-1/p} \|f\|_{H^{p}_{|\cdot|^{a}}} \int_{0}^{\infty} t^{\frac{a+1}{p}-1} d|\mu|(t),
\end{align*}
where $|\mu|$ denotes the total variation measure. This proves that $\mathcal{H}_\mu$ is well defined on $\mathbb{U}$. Consequently, the condition 
\begin{equation}\label{Boundedness mu}
 \int_{0}^{\infty} t^{\frac{a+1}{p}-1} d|\mu|(t) < \infty  
\end{equation}
is sufficient for the boundedness of $\mathcal{H}_{\mu}$. Indeed, by Minkowski's inequality, we have that 
\[
\|\mathcal{H}_\mu\|_{H^{p}_{|\cdot|^{a}}} \leq \int_{0}^{\infty} t^{\frac{a+1}{p}-1} d|\mu|(t) \quad \text{and} \quad \|H_\mu\|_{L^{p}(\mathbb{R}, |\cdot|^{a})} \leq \int_{0}^{\infty} t^{\frac{a+1}{p}-1} d|\mu|(t).
\]
Analogous to the case of absolutely continuous measures, one may verify that
\begin{equation}\label{apr spe}
 \overline{\widehat{k_{a,p}}(\mathbb{R})} \subseteq \sigma(\mathcal{H}_{\mu}, H^{p}_{|\cdot|^{a}}(\mathbb{U})) \quad \text{and} \quad \sup_{\xi \in \mathbb{R}} \left| \widehat{k_{a,p}}(\xi) \right| \leq \|\mathcal{H}_{\mu}\|_{H^{p}_{|\cdot|^{a}}},
\end{equation}
where $\widehat{k_{a,p}}(\xi) = \int^{\infty}_{0} t^{\frac{a+1}{p}} \frac{d\mu(t)}{t^{1+i\xi}}$.

\medskip
As established in Section 2, the spectral analysis of the Hausdorff operator $H_\mu$ on the power-weighted space $L^p(\mathbb{R}, |\cdot|^a)$ reduces to the study of a convolution operator
\begin{equation}
    \sigma(H_\mu, L^{p}(\mathbb{R}, |\cdot|^{a})) = \sigma(T_\nu, L^{p}(\mathbb{R})),
\end{equation}
where $T_{\nu}f = \nu * f$ is the convolution operator on $L^p(\mathbb{R})$ associated with the measure $\nu$ defined by
\begin{equation}\label{pushback}
d\nu(s) = e^{(\frac{a+1}{p}-1)s}dL^{*}[\mu](s),
\end{equation}
and $L^{*}\mu$
is the push forward measure associated with the function $L(s)=\ln(s)$.

Consequently, characterizing the spectrum of the Hausdorff operator is equivalent to analyzing the spectrum of its corresponding convolution measure.

For $\nu, \lambda \in M(\mathbb{R})$, the convolution $\nu *\lambda$ is defined as the unique measure such that for any Borel set $A \subset \mathbb{R}$
\[
(\nu *\lambda)(A)=\int_{\mathbb{R}}\int_{\mathbb{R}}\chi_{A}(x+y)\, d\nu(x)\, d\lambda(y).
\]
Since the Dirac measure $\delta_0$ acts as the identity, that is $\delta_0 * \nu = \nu$, we can consider $T_\nu$ acting on the measure algebra $M(\mathbb{R})$. Following Zafran \cite{Zafran1973}, a measure $\nu \in M(\mathbb{R})$ is said to possess a {natural spectrum} if 
\[
\sigma(T_\nu, M(\mathbb{R})) =\hat{\nu}(\mathbb{R}) \cup \{0\}.
\]
We denote the set of such measures as $\mathcal{N}(\mathbb{R})$. While $\mathcal{N}(\mathbb{R})$ contains all absolutely continuous and discrete measures, the {Wiener-Pitt phenomenon} \cite{Wiener1938} establishes that this property is not universal. Specifically, there exist measures in $M_{0}(\mathbb{R})$---the closed ideal of measures whose Fourier-Stieltjes transforms vanish at infinity---which do not possess a natural spectrum.

Suppose that $\nu \in \mathcal{N}(\mathbb{R})$. For any $\lambda \in \mathbb{C} \setminus \left( \hat{\nu}({\mathbb{R}}) \cup\{0\}\right)$, there exists an inverse measure $\nu_\lambda \in M(\mathbb{R})$ such that $(\lambda\delta_0 - \nu) * \nu_\lambda = \delta_0$. This implies that $\lambda I - T_\nu$ is invertible with inverse $T_{\nu_\lambda}$, and consequently $\sigma(T_\nu, L^p(\mathbb{R})) \subseteq \hat{\nu}(\mathbb{R}) \cup \{0\}$.

By lifting this back to $L^p(\mathbb{R}_+, |\cdot|^a)$, we define the operator $H_{\nu_\lambda^*}$ via the measure 
\[
d\nu_{\lambda}^*(s) = s^{1-\frac{a+1}{p}}dE^{*}[\nu_{\lambda}](s),
\]
where $E^{*}{\nu_\lambda}$ is the push forward measure associated with the function $E(s)=e^s$.
The total variation satisfies
\[
\int_{0}^{\infty} t^{\frac{a+1}{p}-1} d|\nu_{\lambda}^*|(t) = \int_{\mathbb{R}} d|\nu_{\lambda}|(t) = \|\nu_\lambda\| < \infty,
\]
ensuring that $H_{\nu_\lambda^*}$ and $\mathcal{H}_{\nu_\lambda^*}$ are bounded and they act as the inverses of $\lambda I - H_\mu$ and $\lambda I - \mathcal{H}_\mu$. Following the proof of Theorem \ref{Theorem spectrum weighted Hardy}, we obtain
\[
\sigma(\mathcal{H}_\mu,H^{p}_{|\cdot|^{a}}(\mathbb{U})) \subseteq \hat{\nu}(\mathbb{R}) \cup \{0\} \quad \text{and} \quad \sigma(H_\mu,L^{p}(\mathbb{R},|\cdot|^{a})) \subseteq \hat{\nu}(\mathbb{R})\cup \{0\}.
\]
Applying \eqref{apr spe} and \eqref{pushback}, it follows also that
\[
\sigma(\mathcal{H}_\mu,H^{p}_{|\cdot|^{a}}(\mathbb{U})) = \widehat{k_{a,p}}(\mathbb{R})\cup\{0\}.
\]

The situation is more delicate when $\nu$ does not have a natural spectrum. While the test functions from Theorem \ref{Theorem spectrum weighted Hardy} show that $\hat{\nu}(\mathbb{R})$ is contained in the approximate point spectrum, the identity $\sigma(T_\nu, M(\mathbb{R})) = \hat{\nu}(\mathbb{R}) \cup \{0\}$ may fail, and a full characterization of $\sigma(H_\mu, L^{p}(\mathbb{R}, |\cdot|^a))$ remains to be understood.

However, for specific classes of measures, progress is still possible. If for example $\nu \in M_0(\mathbb{R})$, then, for $1 < p < \infty$
\begin{equation}\label{last}
  \sigma(T_\nu, L^p(\mathbb{R})) = \hat{\nu}(\mathbb{R}) \cup \{0\}.  
\end{equation}
This and \eqref{apr spe} allows us to conclude that $\sigma(H_{\mu}, L^{p}_{|\cdot|^{a}}(\mathbb{R})) = \overline{\widehat{k_{a,p}}(\mathbb{R})}$. Furthermore, as in \cite[Theorem 1.5, (ii)]{Hung2025}, $H_{\mu}$ commutes with the Cauchy-Szeg\H{o} projection $P: L^{p}(\mathbb{R}, |\cdot|^{a}) \to H^{p}_{|\cdot|^{a}}(\mathbb{U})$ for $-1 < a < p-1$. By applying Proposition \ref{subspace theorem} and the relation \eqref{last}, we conclude that if $H_{\mu}$ commutes with $P$ and the measure 
\[
e^{(\frac{a+1}{p}-1)s}dL^{*}[\mu](s) \in M_0(\mathbb{R}),
\]
then, for $-1 <a< p -1$,
\[
\sigma(\mathcal{H}_{\mu}, H^{p}_{|\cdot|^{a}}(\mathbb{R})) = \widehat{k_{a,p}}(\mathbb{R})\cup\{0\}.
\]

\section*{Declarations}
No data was used for the research described in the article. The authors assume full responsibility for the contents of this article.

\bibliographystyle{plain}

\bibliography{ProJectA_B}

\medskip

\end{document}